\newtheorem{theorem}{Theorem}
\numberwithin{theorem}{section}
\newtheorem{proposition}[theorem]{Proposition}
\newtheorem{lemma}[theorem]{Lemma}
\newtheorem{corollary}[theorem]{Corollary}
\newtheorem{remark}[theorem]{Remark}
\newtheorem{example}[theorem]{Example}
\def\C{{\mathbb C}}
\def\R{{\mathbb R}}
\def \P{{\mathbb P}}
\title{\bf Ranks and Symmetric Ranks \\ of Cubic Surfaces}
\author{Anna Seigal}
\date{}
\begin{document}

\maketitle

\begin{abstract}
We study cubic surfaces as symmetric tensors of format $4 \times 4 \times 4$. We consider the non-symmetric tensor rank and the symmetric Waring rank of cubic surfaces, and show that the two notions coincide over the complex numbers. 
The corresponding algebraic problem concerns border ranks.
We show that the non-symmetric border rank coincides with the symmetric border rank for cubic surfaces. As part of our analysis, we obtain minimal ideal generators for the symmetric analogue to the secant variety from the salmon conjecture. We also give a test for symmetric rank given by the non-vanishing of certain discriminants.
The results extend to order three tensors of all sizes, 
implying the equality of rank and symmetric rank when the symmetric rank is at most seven, and the equality of border rank and symmetric border rank when the symmetric border rank is at most five.
We also study real ranks via the real substitution method.
\end{abstract}

\section{Introduction}

A cubic surface is the zero set in $\P^3$ of a homogeneous cubic polynomial in four variables,
$$f = c_{3000} x_1^3 + c_{2100} x_1^2 x_2 + c_{1200} x_1 x_2^2 + c_{0300} x_2^3 + c_{2010} x_1^2 x_3 + \cdots + c_{0003} x_4^3 .$$
Such a polynomial has 20 coefficients,
hence the space of cubic surfaces is 19-dimensional.
Cubic surfaces are a central topic of study in classical algebraic geometry,
and a motivating example for more modern topics. Most prominently, 
the discovery of the 27 lines on the cubic surface in 1849 is celebrated as the beginning of modern algebraic geometry~\cite{S,V}.

We study cubic surfaces from the perspective of tensors, the multidimensional generalization of matrices.
A symmetric tensor $T$ of size $4 \times 4 \times 4$ has entries $T_{ijk}$, for $1 \leq i, j, k \leq 4$, which satisfy the symmetry relations $T_{ijk} = T_{ikj} = T_{jik} = T_{jki} = T_{kij} = T_{kji}$.
The space of $4 \times 4 \times 4$ symmetric tensors up to scale is also 19-dimensional.
Homogeneous quaternary cubics and symmetric $4 \times 4 \times 4$ tensors are in bijection via the correspondence
$$ f(x_1, x_2, x_3, x_4) = \sum_{i,j,k = 1}^4 T_{ijk} x_i x_j x_k .$$

More generally, homogeneous polynomials of degree $d$ in $n$ variables are in bijection with
symmetric tensors of size $n \times \cdots \times n$ ($d$ times).
It is a question of classical interest to find the shortest decomposition 
of a degree $d$ polynomial $f \in \mathbb{K}[x_1, \ldots, x_n]$
into a sum of powers,
$f = \sum_{i = 1}^r \lambda_i l_i^d$, where each $l_i \in \mathbb{K}[x_1, \ldots, x_n]$ is a linear form and $\lambda_i \in \mathbb{K}$. The minimal number of summands in such a decomposition is the {\em Waring rank} of $f$ over $\mathbb{K}$.
Equivalently, the Waring rank is the {\em symmetric rank} of the tensor corresponding to $f$, the length $r$ of its shortest decomposition as a sum of symmetric rank one tensors, $T = \sum_{i = 1}^r \lambda_i u_i^{\otimes d}$.
The vector $u_i \in \mathbb{K}^n$ lists the coefficients of the linear form~$l_i$.
There is also the notion of {\em (non-symmetric) rank} over $\mathbb{K}$. This is the length of the shortest decomposition of a tensor into a sum of rank one tensors, $T = \sum_{i = 1}^r u_i \otimes v_i \otimes \cdots \otimes w_i$, where $u_i, v_i, \ldots, w_i$ are $d$ vectors in $\mathbb{K}^n$. In the context of tensors the degree $d$ is called the order.

The set of tensors of rank $\leq r$ is not closed whenever it is a proper subset of the space of tensors and $r > 1$. The same holds for the space of symmetric tensors of symmetric rank~$\leq r$~\cite{CGLM}. In light of this, each of the above notions of rank have closed analogues called {\em border ranks}. For a given notion of rank, and tensor $T$, the border rank is the smallest $r$ such that~$T$ lies in the closure of the rank $r$ tensors. 

The rank of a tensor depends on the field $\mathbb{K}$ over which the decomposition is taken.
In this article, we focus on the case of ranks defined over the complex numbers, $\mathbb{K} = \C$, and also consider the real rank case $\mathbb{K} = \R$. When the field is not specified, we are referring to the usual complex case.
The real and complex rank of a tensor need not agree. 
It follows from the definitions that complex rank is bounded above by real rank, border rank is less than or equal to rank, and non-symmetric rank is less than or equal to symmetric rank. For matrices the real and complex ranks agree, and also rank and border rank are the same. This is not true for tensors, as we see in the following two examples.

\begin{example}[The cubic surface $x_1^2 x_2 + x_3^3 - x_3 x_4^2$] 
The monomial $x_1^2 x_2$ has (complex or real) border rank two, and (complex or real) rank three. Evaluating the hyperdeterminant of $x_3^3 - x_3 x_4^2$ shows that it has complex rank two and real rank three~\cite{SS}, and its rank and border rank are equal.
Since the variables from the two parts of the sum are disjoint, and Strassen's Conjecture~\cite[\S5.7]{L} holds here, the cubic surface has complex border rank four, real border rank five, complex rank five and real rank six.
\end{example}

\begin{example}[The cubic threefold $x_1(x_1 x_2 + x_3^2 ) + x_4^2 - x_4 x_5^2$]
The cubic curve $x_1(x_1 x_2 + x_3^2)$ is a conic and tangent line. It has 
(real or complex) border rank three and (real or complex) rank five~\cite{B}. Hence, using the previous example, the cubic threefold has complex border rank five, real border rank six, complex rank seven and real rank eight.
\end{example}

Tensors and symmetric tensors arise in applications such as complexity theory, multivariate statistics, medical imaging, multiway factor analysis, numerical analysis, and signal processing (see \cite{CGLM,L2,L} and references therein). Just as for a matrix, the rank of a tensor is one of its fundamental properties. For a tensor of data, the rank is the number of `signals' which are combined in the data.
A key result from linear algebra says that a symmetric matrix of rank~$r$ can be written as a sum of $r$ rank one {\em symmetric} matrices. The generalization of this result to tensors is a topic of ongoing study.

Comon's conjecture states the equality of the rank and symmetric rank of a symmetric tensor. First posed 10 years ago~\cite{CGLM}, it has been conjectured for {\em complex rank}, {\em complex border rank}, {\em real rank} and {\em real border rank}. The conjecture has been proved in many special cases: when the symmetric rank is at most two~\cite{CGLM}, when the rank is less than or equal to the order~\cite{ZHQ}, and when the rank is at most the flattening rank plus one~\cite{F}. 
Furthermore, the conjecture has been proved to generically hold in certain families of tensors~\cite{BBCG}. 
On the other hand, a counter-example to Comon's conjecture for complex rank has been found by Shitov~\cite{Sh}, a tensor of size $800 \times 800 \times 800$. It is an open problem to characterize which tensors have the same rank and symmetric rank, for different notions of rank.

\bigskip

In this article, we study the smallest tensor
  format for which the agreement of rank and symmetric rank was not known: cubic surfaces, or symmetric $4 \times 4 \times 4$ tensors.
There does not exist a finite list of normal forms in this case, because the dimension of the general linear group $PGL_4$ is~15, whereas the space of cubic surfaces is 19-dimensional. 
  We prove the following result.
  
    \begin{theorem} \label{rank} The rank and symmetric rank agree for cubic surfaces. \end{theorem}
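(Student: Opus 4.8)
The plan is to reduce Theorem~\ref{rank} to a finite case analysis organized by the (non-symmetric) rank $r$, and to show that in each range the non-symmetric rank already forces a symmetric decomposition of the same length. First I would recall the general known bounds: for cubic surfaces the generic rank is known to be small (at most $5$ by Alexander--Hirschowitz and the classical count, since $5 \cdot 4 = 20$ is the dimension of the space of cubics), and more importantly the maximal symmetric rank of a quaternary cubic is bounded. So the whole statement is a statement about tensors of rank at most some explicit constant, and it suffices to check, for each $r$ from $1$ up to that constant, that a symmetric $4\times 4\times 4$ tensor of non-symmetric rank $r$ has symmetric rank $r$.

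For the low ranges I would invoke the results quoted in the introduction. Comon's conjecture is known when the symmetric rank is at most the order~\cite{ZHQ}, which here handles everything through rank $3$; and it is known when the rank is at most the flattening (multilinear) rank plus one~\cite{F}. Since a $4\times 4\times 4$ tensor has every flattening of rank at most $4$, the result of~\cite{F} already disposes of all tensors of rank at most $5$ whose flattening rank is exactly $4$ — i.e.\ the generic and subgeneric cases. The remaining work is therefore concentrated in two places: tensors of rank $r$ with $r$ strictly larger than flattening-rank-plus-one (so rank $\geq 6$, or rank $\geq 5$ with a degenerate flattening), and a handful of degenerate flattening types where \cite{F} does not apply directly. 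I would handle the degenerate-flattening cases by a concentration/restriction argument: if every flattening has rank $\leq k < 4$, then $T$ genuinely lives (after a linear change of coordinates) in a smaller format $k \times k \times k$, where either the normal-form classification is available or the earlier theorems apply with room to spare.

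The heart of the argument, and what I expect to be the main obstacle, is controlling the tensors of non-symmetric rank $\geq 6$ — showing there are none among symmetric $4\times 4\times 4$ tensors, or that any such tensor again has equal symmetric rank. The natural tool is border rank: the excerpt already promises that non-symmetric border rank equals symmetric border rank for cubic surfaces, obtained via explicit minimal ideal generators for the symmetric analogue of the salmon-conjecture secant variety. So I would first pin down the maximal border rank, then use it together with the structure of the relevant secant varieties (and the catalecticant/discriminant test promised in the abstract) to bound the non-symmetric rank of any cubic surface. Concretely, once $T$ has border rank $b$, it lies in the $b$-th secant variety $\sigma_b$ of the Veronese; the stratification of $\sigma_b \setminus \sigma_{b-1}$ by symmetric rank is classically understood for the small values of $b$ occurring here, and in each stratum one exhibits an explicit symmetric decomposition of the correct length $r$, matching the known non-symmetric rank. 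Equivalently, one rules out the possibility that a symmetric tensor realizes its non-symmetric rank only through a non-symmetric decomposition by the substitution/apolarity method: given a hypothetical length-$r$ non-symmetric decomposition, one symmetrizes it and controls the rank increase using the small flattening ranks. The delicate point throughout is the boundary between "generic enough that \cite{F} applies" and "degenerate enough that a normal form or dimension count finishes it", and making sure the finitely many intermediate degenerate types are each explicitly dispatched rather than swept under the rug.
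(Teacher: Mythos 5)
Your proposal correctly identifies Friedland's theorem~\cite{F} as the workhorse, and your general plan (reduce degenerate flattening ranks to smaller formats, push Friedland as far as it goes, then deal with high rank cases separately) does match the paper's skeleton. But the proposal has two real gaps in precisely the places you flag as "the heart of the argument."

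First, you leave unstated the small but crucial strengthening of Friedland that makes the whole thing tractable: if the \emph{symmetric} rank is at most flattening rank plus two, then rank and symmetric rank already agree (otherwise rank $<$ symmetric rank, so rank $\leq$ flattening rank $+1$ and Friedland gives a contradiction). Since the hard cases all have flattening rank $4$, this handles everything with symmetric rank $\leq 6$, and reduces the problem to two concrete tasks: (a) show that "most" cubic surfaces have symmetric rank at most $6$, and (b) directly compute the non-symmetric rank of the handful of normal forms with symmetric rank $7$. Your proposal doesn't isolate this dichotomy and instead gestures at border rank, secant variety stratifications, apolarity, and symmetrization without committing to any of them; none of those is how the argument actually closes.

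Second, for task (b) you flirt with the idea that there simply are no symmetric $4\times4\times4$ tensors of rank $\geq 6$ — but there are: $x_1(x_1x_2+x_3^2+x_4^2)$ and $x_1^2x_2+x_1x_3x_4+x_3^3$ have (symmetric) rank $7$. The paper dispatches these with the substitution method as a \emph{lower} bound on non-symmetric rank, paired with the known symmetric-rank-$7$ upper bound; your proposal mentions substitution only as a tool for "ruling out," which is the wrong direction. For task (a), the paper's mechanism is a concrete discriminant test — showing that $f+l^3$ can be made non-singular with generic Hessian, i.e.\ avoiding the ordinary discriminant and a Hurwitz-form-derived Hessian discriminant — organized by the classification of cubic surfaces according to singularity type, with an explicit fallback decomposition for the $E_6$ case. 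None of this classification-driven machinery appears in your plan, and without it the "finitely many intermediate degenerate types" you promise to dispatch are never actually enumerated. So the proposal is a reasonable sketch of the outer structure, but the two load-bearing steps are missing.
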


The conclusion extends to arbitrary tensors of format $n \times n \times n$, by giving larger ranges of ranks among which all tensors have agreement of rank and symmetric rank.
 
  \begin{corollary}\label{corrank}The rank and symmetric rank of a cubic polynomial in $n$ variables (order three symmetric tensor) are the same, whenever the symmetric rank is at most seven.
 \end{corollary}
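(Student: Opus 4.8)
The plan is to bootstrap from Theorem~\ref{rank} using the principle of concision together with the partial results on Comon's conjecture recalled in the introduction. Write $r$ and $r_s$ for the rank and symmetric rank of the given cubic $f$ in $n$ variables, and let $m$ be the rank of a flattening of the associated symmetric tensor $T$; since $T$ is symmetric all three flattenings have the same rank. One always has $m \le r \le r_s$, so, arguing by contradiction, I would suppose $r < r_s$; as $r_s \le 7$ this forces $r \le 6$.

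First I would handle the case $r \le m+1$. Here Friedland's theorem~\cite{F}, that Comon's conjecture holds whenever the rank is at most the flattening rank plus one, applies directly and gives $r = r_s$, contradicting $r < r_s$. So one may assume $r \ge m+2$, and then $m \le r-2 \le 4$.

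Next I would use concision to reduce to a cubic in at most four variables. The tensor $T$ lies in $U^{\otimes 3}$ for a subspace $U \subseteq \mathbb{K}^n$ of dimension $m$, and composing any rank-one decomposition of $T$ --- symmetric or not --- with a projection $\mathbb{K}^n \to U$ in each of the three factors leaves $T$ fixed and does not increase the number of summands; dually, on the level of the form this just amounts to setting $n-m$ coordinates to zero in a Waring decomposition. Hence $r$, $r_s$ and $m$ are all unchanged when $f$ is replaced by the corresponding concise cubic in $m \le 4$ variables. A short case analysis on $m$ now closes the argument: for $m=4$ the contradiction is exactly Theorem~\ref{rank}; for $m=3$ a ternary cubic has symmetric rank at most $5$ by classical results, so $r \le r_s \le 5$ while $r \ge m+2 = 5$, forcing $r = r_s = 5$; and for $m \le 2$ a binary cubic (or a power of a linear form) has symmetric rank at most $3$, incompatible with $r \ge m+2$.

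I expect the main obstacle to be bookkeeping rather than a new idea: one must verify that the concision reduction really preserves the \emph{symmetric} rank --- where the duality between the cubic form and its tensor needs a little care --- and that the numerical ranges interlock. It is precisely the combination of Theorem~\ref{rank} in format $4\times 4\times 4$, Friedland's bound, and the classical bounds in three and two variables, together with the inequality $m \le r-2 \le 4$ that becomes available once $r \le 6$, that pins the threshold in Corollary~\ref{corrank} at seven.
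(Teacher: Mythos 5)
Your proposal is correct and follows essentially the same route as the paper: concision reduces the case of flattening rank $m\le 4$ to cubic surfaces, where Theorem~\ref{rank} applies, and Friedland's bound (Theorem~\ref{friedland}) covers $m\ge 5$, since a rank at most six then automatically satisfies $r\le m+1$. The extra hand-verification you do for $m=3$ and $m\le 2$ via classical bounds on ternary and binary cubics is harmless but redundant, as Theorem~\ref{rank} already covers cones over cubic curves (that case is treated explicitly in the paper's Section~\ref{cpxrnk}), so you could simply invoke it for all $m\le 4$.
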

 
We make the following contributions for border ranks over the complex numbers.
 
 \begin{theorem}\label{border}The border rank and symmetric border rank agree for cubic surfaces.
 \end{theorem}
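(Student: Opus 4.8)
The plan is to first pin down which border ranks can occur and then verify Comon's equality value by value. Non-symmetric border rank never exceeds symmetric border rank, and every cubic surface has symmetric border rank at most five, because $\nu_3(\P^3)$ is not defective (Alexander--Hirschowitz) and hence $\sigma_5(\nu_3(\P^3)) = \P^{19} = \P(\operatorname{Sym}^3\C^4)$. Thus $\underline{R}(T) \le \underline{R}_{\mathrm{sym}}(T) \le 5$, and when $\underline{R}(T) = 5$ the theorem is immediate. So it suffices to prove, for each $r \in \{1,2,3,4\}$, that a symmetric tensor lying in the secant variety $\sigma_r(\mathrm{Seg}(\P^3 \times \P^3 \times \P^3))$ already lies in $\sigma_r(\nu_3(\P^3))$; the reverse containment is automatic, since a symmetric sum of $r$ rank-one tensors is a fortiori a sum of $r$ rank-one tensors and border ranks are closures.

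For $r \le 3$ I would use a catalecticant reduction. The flattening of a border-rank-$\le r$ tensor has rank $\le r$ by semicontinuity; for the symmetric tensor $T$ attached to a quaternary cubic $f$, all three one-factor flattenings equal the catalecticant $C_{1,2}(f) \colon (\C^4)^* \to \operatorname{Sym}^2\C^4,\ \ell \mapsto \partial_\ell f$, so $\dim\ker C_{1,2}(f) \ge 4-r$ and, after a linear change of coordinates, $f$ involves at most $r$ variables. When $r = 1$, $f$ is a cube and has symmetric rank one; when $r = 2$, $f$ is a binary cubic and $\sigma_2(\nu_3(\P^1)) = \P(\operatorname{Sym}^3\C^2)$ since the secant variety of the twisted cubic fills $\P^3$; when $r = 3$, $f$ is a ternary cubic which, regarded as a $3\times3\times3$ tensor, lies in $\sigma_3(\mathrm{Seg}(\P^2\times\P^2\times\P^2))$ (border rank does not increase under restriction of coordinates). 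The last variety is cut out set-theoretically by Strassen's commutation equations of degree four, and one checks that their restriction to $\operatorname{Sym}^3\C^3$ is a nonzero scalar multiple of the Aronhold invariant --- the unique degree-four $SL_3$-invariant of ternary cubics --- whose hypersurface is exactly $\sigma_3(\nu_3(\P^2))$; nonvanishing of this restriction follows because a general ternary cubic has symmetric border rank four and so is not in the $3\times3\times3$ secant. Hence $f \in \sigma_3(\nu_3(\P^2)) \subseteq \sigma_3(\nu_3(\P^3))$.

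The case $r = 4$ is the real content, and here catalecticant rank gives nothing, as a $4 \times 10$ matrix automatically has rank at most four. Instead I would invoke the resolved salmon conjecture (Friedland--Gross, with the ideal-theoretic refinement of Raicu), which provides explicit equations cutting out $\sigma_4(\mathrm{Seg}(\P^3\times\P^3\times\P^3))$ --- low-degree Young (Koszul) flattening equations together with the higher-degree Strassen-type commutation equations. I would restrict these equations to $\P(\operatorname{Sym}^3\C^4)$, decompose their span under the $GL_4$-action (equivalently, carry out a Gr\"obner-basis computation) to read off minimal generators of the resulting ideal --- the symmetric analogue of the salmon secant --- and then verify that its zero locus coincides with $\sigma_4(\nu_3(\P^3))$. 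Since $\sigma_4(\nu_3(\P^3)) \subseteq \sigma_4(\mathrm{Seg}) \cap \P(\operatorname{Sym}^3\C^4)$ is automatic, what is actually being proved is that the restricted salmon equations cut out nothing strictly larger: no symmetric tensor has non-symmetric border rank exactly four but symmetric border rank five.

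That last implication is the main obstacle. Because $\sigma_4(\nu_3(\P^3))$ has codimension only four in $\P^{19}$, whereas $\sigma_4(\mathrm{Seg})$ has codimension twenty-four in $\P^{63}$, the intersection with the $20$-dimensional symmetric subspace is far from transverse, so no dimension count can substitute for an honest analysis of the equations. In particular the non-flattening salmon equations --- the higher-degree commutation relations --- are indispensable after restriction, and the real work is to confirm that enough of them survive and that the ideal they generate has the correct radical. Pinning down this representation-theoretic bookkeeping, and matching the restricted ideal against $I(\sigma_4(\nu_3(\P^3)))$, is where the difficulty is concentrated; the cases $r \le 3$, and $r = 5$, are comparatively routine.
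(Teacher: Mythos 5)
Your overall strategy matches the paper's: bound the symmetric border rank by five via Alexander--Hirschowitz, reduce $r\le 3$ to fewer variables and compare equations (flattening minors, Strassen/Aronhold), and for $r=4$ restrict the salmon equations to the symmetric slice. However, the proposal has a genuine gap exactly where you flag the difficulty: for $r=4$ you describe what computation you \emph{would} do (restrict the salmon equations, decompose under $GL_4$, match against $I(\sigma_4(\nu_3(\P^3)))$) without carrying it out, and the theorem rests entirely on this step being true. The paper actually performs it: restricting the 1728-dimensional degree-five salmon module to symmetric tensors yields 36 linearly independent quintics, and Macaulay2/Bertini computations show these generate a prime, Gorenstein, codimension-four ideal of degree 105, whose zero locus is irreducible and therefore equals $\sigma_4(V_4)$ since the latter sits inside it with the same codimension (Proposition~\ref{salmon}). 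One consequence contradicts a specific guess in your sketch: you write that the higher-degree, non-flattening commutation equations are ``indispensable after restriction,'' but the computation shows the opposite --- the degree-five equations alone already cut out $\sigma_4(V_4)$ ideal-theoretically after symmetrization, so the degree-six and degree-nine salmon equations play no role here. A smaller difference is that for $r\le 3$ the paper organizes the argument via an inheritance lemma (Proposition~\ref{alln}) lifting the $r\times r\times r$ case to $n\times n\times n$, whereas you reduce directly via the catalecticant kernel; both are fine.
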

 
  \begin{corollary}\label{corbor} The rank and symmetric border rank of a cubic polynomial are the same whenever the symmetric border rank is at most five.
 \end{corollary}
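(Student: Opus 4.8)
The plan is to reduce the statement to Theorem~\ref{border} by controlling the number of essential variables of $f$. Let $f$ be a cubic in $n$ variables with symmetric border rank at most $5$, and let $T \in \mathrm{Sym}^3\C^n$ be the associated symmetric tensor. Let $w$ denote the number of essential variables of $f$: the dimension of the smallest linear subspace $W$ of the space of linear forms for which $f \in \mathrm{Sym}^3 W$. Classically, $w$ equals the rank of the first-order catalecticant of $f$, which coincides (because $T$ is symmetric) with the rank of the first flattening of $T$. Since the rank of any flattening is a lower bound for border rank, and non-symmetric border rank never exceeds symmetric border rank, I obtain
\[
w \;\le\; \underline{R}(f) \;\le\; \underline{R}_{\mathrm{sym}}(f) \;\le\; 5 ,
\]
where $\underline{R}$ and $\underline{R}_{\mathrm{sym}}$ denote the non-symmetric and symmetric border rank.

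Now I would split into two cases according to $w$. If $w = 5$, then $\underline{R}_{\mathrm{sym}}(f) \ge w = 5$, so $\underline{R}_{\mathrm{sym}}(f) = 5$; combined with $5 = w \le \underline{R}(f) \le \underline{R}_{\mathrm{sym}}(f) = 5$ this forces $\underline{R}(f) = \underline{R}_{\mathrm{sym}}(f)$, so Comon's equality holds. If instead $w \le 4$, then after a linear change of coordinates $f$ involves only $x_1, \dots, x_w$ and is a (possibly degenerate) cubic surface, to which I would apply Theorem~\ref{border}. To make this legitimate I need concision for border rank: passing $f$ between the ambient spaces $\C^n$, $\C^4$, and $\C^w$ changes neither its symmetric nor its non-symmetric border rank. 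Concretely, given any sequence of tensors of rank $\le r$ converging to $T$, applying the linear projection $\pi^{\otimes 3}$ with $\pi\colon \C^n \to W$ a retraction onto $W$ produces a sequence of rank $\le r$ tensors lying in $W^{\otimes 3}$ that still converges to $T$, and the analogous statement holds with $\mathrm{Sym}^3 W$ in place of $W^{\otimes 3}$; the same projection argument relates the $\C^w$ and $\C^4$ pictures.

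The geometric content is entirely contained in Theorem~\ref{border}, so I do not expect a serious obstacle: Corollary~\ref{corbor} is a short structural deduction. The one point demanding care is the concision step, which must be carried out at the level of border rank — that is, for limits of bounded-rank tensors — rather than merely for rank, but this is a routine restriction argument. The role of the hypothesis "at most five" is precisely that it forces $w \le 4$ in the non-trivial case, so that a result about $4\times 4\times 4$ tensors suffices; with a larger bound one would instead have to control cubics in five or more essential variables, for which Theorem~\ref{border} is not enough, and this is exactly why the threshold here is five rather than the seven appearing in Corollary~\ref{corrank}.
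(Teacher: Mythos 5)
Your argument is correct, and it reaches the same conclusion by a genuinely different technical route from the paper. The paper deduces Corollary~\ref{corbor} from Theorem~\ref{border} together with Proposition~\ref{alln}, the ``inheritance'' statement that transports the set-theoretic identity $\sigma_r(V_r)=\sigma_r(S_r)\cap L_r$ up to $\sigma_r(V_n)=\sigma_r(S_n)\cap L_n$; the proof of Proposition~\ref{alln} works at the level of defining equations of the secant varieties (flattening minors plus copies of the equations of $\sigma_r(S_r)$), following the inheritance framework in Landsberg. You instead prove the set-theoretic content you need directly, via concision for border rank: the projection $\pi^{\otimes 3}$ onto the span $W$ of essential variables is continuous, does not increase rank, and fixes $T$, so it carries an approximating family of bounded-rank (or bounded-symmetric-rank) tensors in $(\C^n)^{\otimes 3}$ to one inside $W^{\otimes 3}$ (respectively $\mathrm{Sym}^3 W$). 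Both approaches hinge on the same inequality $w \le \underline{R}(f) \le \underline{R}_{\mathrm{sym}}(f) \le 5$, which forces the essential variables down to at most four in the nontrivial case so that Theorem~\ref{border} applies; the difference is that the paper's route also yields the ideal-theoretic inheritance statement (useful elsewhere), while yours is more elementary and self-contained, requiring only the topological projection argument rather than knowledge of the secant-variety equations.

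One minor remark: the statement as printed says ``rank and symmetric border rank''; both your proof and the paper's treat it, correctly, as ``border rank and symmetric border rank,'' which is evidently the intended reading.
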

 
We also consider ranks over the real numbers. We show that real rank and real border rank agree for generic cubic surfaces, and we study special cases in greater detail.

\bigskip

The notion of {\em flattening rank} is useful in our study. The flattening ranks of a tensor (see e.g.~\cite{L}) are the ranks of its flattening matrices, the reshapings of its entries into matrix format. 
General tensors have a tuple of flattening ranks, one for each distinct flattening. In the $n \times n \times n$ symmetric case the flattening rank is a single number, the rank of the $n \times n^2$ flattening matrix. Since the flattening ranks are ranks of matrices, they inherit properties possessed by matrix rank (such as being closed, equivalence of real and complex rank, and equivalence of non-symmetric and symmetric rank). 
From the definition, the flattening rank cannot exceed any of the ranks described above; the flattening rank of $T = \sum_{i = 1}^r v_i^{\otimes d}$ is the dimension of the span of the vectors $\{ v_1, \ldots, v_r \}$, which is less than or equal to~$r$.

The space of symmetric $n \times n \times \cdots \times n$ ($d$ times) tensors with entries in a field $\mathbb{K}$ is denoted $S^d(\mathbb{K}^n)$. The analogous space of (not necessarily symmetric) tensors is $\mathbb{K}^n \otimes \cdots \otimes \mathbb{K}^n$ ($d$ times).

\bigskip 

The rest of this article is organized as follows. We prove the results in Theorem~\ref{rank} and Corollary~\ref{corrank} concerning complex rank in Section~\ref{cpxrnk}. We prove the complex border rank results in Theorem~\ref{border} and Corollary~\ref{corbor} in Section~\ref{cpxborder}. We discuss real ranks in Section~\ref{realranks}.

\section{Ranks of cubic surfaces} \label{cpxrnk}

In this section we prove Theorem~\ref{rank}, that the rank and symmetric rank coincide for a cubic surface over the complex numbers. We use the following three results, as well as the classification of non-singular cubic surfaces in~\cite{S}.

\begin{theorem}[Sylvester's Pentahedral Theorem (1851), see~{\cite[\S84]{S}}] \label{sylv}
A generic cubic surface
can be decomposed uniquely as the sum of five cubes of linear forms, $f = l_1^3 + l_2^3 + l_3^3 + l_4^3 + l_5^3$ where each $l_i \in \C[x_1,x_2,x_3,x_4]$ is a linear form.
\end{theorem}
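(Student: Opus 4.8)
Sylvester's theorem has two halves — that a generic quaternary cubic is a sum of five cubes, and that the five summands are essentially unique — and I would handle both at once by showing that a natural correspondence between the space of cubics and the space of ``pentahedra'' is birational. Let
$$\mathcal{I}\ =\ \overline{\bigl\{\,(f,Z)\ :\ Z=\{p_1,\dots,p_5\}\subset\P^3\ \text{distinct},\ f\in\langle v_3(p_1),\dots,v_3(p_5)\rangle\,\bigr\}}\ \subset\ \P^{19}\times\mathrm{Sym}^5(\P^3),$$
where $v_3$ is the cubic Veronese embedding and $\P^{19}=\P(S^3\C^4)$ is the space of cubic surfaces. Over the dense open subset of $\mathrm{Sym}^5(\P^3)$ where the five points are distinct and linearly independent, projection to $\mathrm{Sym}^5(\P^3)$ realizes $\mathcal{I}$ as a $\P^4$-bundle, so $\mathcal{I}$ is irreducible of dimension $15+4=19$. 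The second projection $\pi\colon\mathcal{I}\to\P^{19}$ is proper, and a point of $\pi^{-1}(f)$ is precisely a Sylvester pentahedron for $f$. The theorem is the assertion that $\pi$ is birational.

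\emph{Existence.} Surjectivity of $\pi$ says exactly that the fifth secant variety $\sigma_5(v_3(\P^3))$ equals $\P^{19}$; as its expected dimension is $5\cdot 3+4=19$, this is the statement that $v_3(\P^3)$ is not $5$-defective, and $(n,d)=(4,3)$ is not on the Alexander--Hirschowitz list of exceptions. I would prove it directly: by Terracini's lemma it is enough to see that at five general points the affine tangent spaces to the cone over $v_3(\P^3)$ are linearly independent, equivalently that five general double points of $\P^3$ impose independent conditions on cubic forms (a $20=5\cdot 4$ count), which one checks at one explicit configuration. Thus $\pi$ is a dominant morphism of irreducible $19$-folds, hence generically finite, and it remains to show $\deg\pi=1$.

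\emph{Uniqueness.} I would argue this with apolarity. For generic $f$ the catalecticant $S^2\C^4\to S^1\C^4$, $q\mapsto q(\partial)f$, is surjective (visible on the Fermat cubic $x_1^3+\cdots+x_4^3$), so the degree-two part $(f^\perp)_2$ of the apolar ideal is a $6$-dimensional space of quadrics on $\P^3$. By the Apolarity Lemma, $\pi^{-1}(f)$ consists of the reduced five-point schemes $Z$ with $(I_Z)_2\subseteq(f^\perp)_2$; and since five general points impose independent conditions on quadrics, $(I_Z)_2$ is $5$-dimensional, so such a $Z$ sits, as a hyperplane's worth of quadrics, inside this fixed $6$-dimensional space. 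The theorem amounts to saying that $(f^\perp)_2$ contains the ideal of exactly one set of five points. There are two natural routes. The geometric one: for generic $f$ the space $(f^\perp)_2$ turns out to be base-point free, so it defines a finite morphism $\P^3\to\P^5$ — a linear projection of the second Veronese threefold — and one identifies its unique length-five fibre by studying the non-injectivity locus of this projection. The more hands-on one: exhibit a single cubic $f_0=\ell_1^3+\cdots+\ell_5^3$ with the $\ell_i$ in linearly general position whose length-five decomposition is unique and at which $\pi$ is a local isomorphism; since $\pi$ is proper and generically \'etale, its degree equals the number of points in a fibre over any point where $\pi$ is unramified, and $\pi^{-1}(f_0)=\{(f_0,Z_0)\}$ then forces $\deg\pi=1$.

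\emph{The main obstacle.} All of this except the last step is bookkeeping; the crux is $\deg\pi=1$, i.e.\ the uniqueness. In the hands-on route, this is the genuine verification that some explicit cubic has a \emph{unique} five-cube decomposition — upper semicontinuity gives nothing for free, since a special fibre of $\pi$ can only be smaller than the generic one — while in the geometric route it is the analysis of the $6$-dimensional linear system $(f^\perp)_2$, where the work is to exclude \emph{a priori} possibilities of several apolar five-point schemes, or of a non-reduced one. This is exactly the subtle point in Sylvester's original treatment, which instead produces the five planes explicitly as a covariant of $f$ assembled from catalecticant (equivalently, Hessian) data; the birationality formulation above is tidier to state but rests on the same arithmetic, $\dim S^3\C^4=20=4\cdot 5$.
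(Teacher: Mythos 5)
The paper does not prove this theorem; it invokes it as classical, citing Segre \cite[\S84]{S}. Your framework is a coherent modern reformulation: the incidence variety $\mathcal{I}$ is indeed irreducible of dimension $15+4=19=\dim\P^{19}$, the existence half reduces correctly to non-$5$-defectivity of $v_3(\P^3)$ (five general double points imposing $20$ independent conditions on cubics, verifiable at one configuration), and the apolarity bookkeeping for uniqueness --- $(f^\perp)_2$ six-dimensional, $(I_Z)_2$ five-dimensional --- is right. But, as you flag yourself, this is an outline rather than a proof: the crux, $\deg\pi=1$, is not established. Neither of your two routes is actually carried out. The geometric one requires a genuine analysis of the net $(f^\perp)_2$ ruling out both a second reduced apolar five-point scheme and a non-reduced degeneration, and you only gesture at base-point-freeness and the linear projection of the second Veronese. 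The hands-on one is logically sound (properness plus a reduced singleton fiber at an \'etale point does force degree one), but you never exhibit the explicit $f_0$ nor verify that its fiber is a reduced singleton; that verification --- checking that a specific six-dimensional net of quadrics cuts out exactly one length-five scheme --- is exactly where the content of the theorem lies, and cannot be deferred.

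Worth noting, since the paper leans on it later in Section \ref{realranks}: the classical argument is qualitatively different and makes uniqueness manifest rather than something to be extracted from a degree count. One constructs the pentahedron directly as a covariant of $f$: the Hessian of $f$ is a quartic surface with generically ten nodes, and these are precisely the $\binom{5}{3}=10$ triple intersection points of the five planes $\{l_i=0\}$, from which the planes (and hence the $l_i$ up to cube roots of unity and permutation) are recovered. To complete your version you would need either the explicit witness $f_0$ with a verified unique fiber, or the full analysis of the net $(f^\perp)_2$; absent one of these, there is a genuine gap.
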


\begin{theorem}[{\cite[Theorem 1.1]{F}}] \label{friedland}
Let $\mathbb{K}$ be a field with at least three elements. Consider a tensor $T \in S^d(\mathbb{K}^n)$ whose rank is bounded above by its flattening rank plus one. Then the rank and symmetric rank of $T$ defined over $\mathbb{K}$ coincide. \end{theorem}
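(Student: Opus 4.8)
The plan is to prove the single nontrivial inequality $\mathrm{srank}(T) \le \mathrm{rank}(T)$, since $\mathrm{rank}(T) \le \mathrm{srank}(T)$ and $r_f \le \mathrm{rank}(T)$ always hold, where $r_f$ denotes the flattening rank; thus the hypothesis reads $\mathrm{rank}(T) \in \{r_f, r_f + 1\}$. The first step is a concision reduction. Because $T$ is symmetric, all of its flattening matrices share one column space $W \subseteq \mathbb{K}^n$, with $\dim W = r_f =: m$, and $T$ lies in $S^d(W) \subseteq W^{\otimes d}$. For any linear projection $\pi \colon \mathbb{K}^n \to W$ one has $\pi^{\otimes d}(T) = T$, so applying $\pi^{\otimes d}$ to any decomposition of $T$ (symmetric or not) yields a decomposition of $T$ of no greater length all of whose factor vectors lie in $W$; hence neither $\mathrm{rank}(T)$ nor $\mathrm{srank}(T)$ changes when $T$ is viewed inside $S^d(W)$. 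From now on I would assume $T \in S^d(\mathbb{K}^m)$ has \emph{full} flattening rank $m$ and non-symmetric rank $r \in \{m, m+1\}$, and try to exhibit a symmetric decomposition of length $r$; the case $d = 2$ is the classical diagonalization of a symmetric bilinear form, so I may assume $d \ge 3$.

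\textbf{The case $r = m$.} Fix a rank decomposition $T = \sum_{i=1}^m a_i^{(1)} \otimes \cdots \otimes a_i^{(d)}$. For each slot $j$, the vectors $a_1^{(j)}, \ldots, a_m^{(j)}$ must form a basis of $\mathbb{K}^m$, as otherwise the $j$-th flattening would have rank below $m$. After a change of coordinates making $a_i^{(1)} = e_i$, the reshaping $T = \sum_i e_i \otimes S_i$ has slices $S_i = a_i^{(2)} \otimes \cdots \otimes a_i^{(d)}$, each a rank-one tensor of order $d - 1$; symmetry of $T$ forces each $S_i$ to be a \emph{symmetric} tensor, hence $S_i = \lambda_i b_i^{\otimes (d-1)}$ and $T = \sum_i \lambda_i\, e_i \otimes b_i^{\otimes(d-1)}$. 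Examining the slot-$2$ flattening shows that $\{b_i\}$ is again a basis and all $\lambda_i \ne 0$; then equating $T_{j, \ell_2, \ldots, \ell_d}$ with $T_{\ell_2, j, \ell_3, \ldots, \ell_d}$ and reading both sides as order-$(d-2)$ tensors in $(\ell_3, \ldots, \ell_d)$ forces $b_i$ proportional to $e_i$ (this is where $d - 2 \ge 1$ is used). Therefore $T = \sum_i \mu_i\, e_i^{\otimes d}$, a symmetric decomposition of length $m$.

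\textbf{The case $r = m+1$}, which I expect to be the crux. Fix a rank decomposition $T = \sum_{i=1}^{m+1} a_i^{(1)} \otimes \cdots \otimes a_i^{(d)}$ with all vectors in $\mathbb{K}^m$. The first-slot vectors span $\mathbb{K}^m$, so after reordering and a change of coordinates I may take $a_i^{(1)} = e_i$ for $i \le m$ and $a_{m+1}^{(1)} = \sum_{i=1}^m \gamma_i e_i$. Reshaping gives $T = \sum_{j=1}^m e_j \otimes S_j$ with $S_j = a_j^{(2)} \otimes \cdots \otimes a_j^{(d)} + \gamma_j\, a_{m+1}^{(2)} \otimes \cdots \otimes a_{m+1}^{(d)}$: a symmetric tensor of order $d - 1$ and non-symmetric rank at most two, which by the rank-$\le 2$ case of Comon's conjecture~\cite{CGLM} has symmetric rank at most two, say $S_j = \sum_{\ell = 1}^{2} \mu_{j\ell}\, p_{j\ell}^{\otimes(d-1)}$. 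The remaining work is to reassemble these $m$ ``binary'' decompositions --- which all share the single rank-one tail $a_{m+1}^{(2)} \otimes \cdots \otimes a_{m+1}^{(d)}$ coming from the last summand --- into one symmetric decomposition of $T$ of length exactly $m+1$, rather than $2m$: one wants the $p_{j\ell}$ to align across $j$ inside the common two-dimensional spans, and then to match the first slot against the later slots as in the previous case. The principal obstacle is that, unlike when $r = m$, the non-symmetric decomposition is no longer essentially unique, so symmetry of $T$ does not by itself pin it down; one has to actively move a given length-$(m+1)$ non-symmetric decomposition to a symmetric one while handling the degenerate configurations (some $\gamma_j = 0$, or some $S_j$ of rank one or zero, or coordinate and rescaling choices that threaten to make two rank-one terms coincide). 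The hypothesis that $\mathbb{K}$ has at least three elements is what leaves enough room to avoid those collisions.
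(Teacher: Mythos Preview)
The paper does not prove this theorem; it is quoted verbatim as \cite[Theorem~1.1]{F} and used as a black box. So there is no proof in the paper to compare against, and your task here was really to reproduce Friedland's argument.

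Your concision step and the case $r=m$ are essentially correct and follow the standard line. The genuine gap is in the case $r=m+1$, which you yourself flag as unfinished. The difficulty is sharper than your sketch suggests. You write each slice as
\[
S_j \;=\; a_j^{(2)}\otimes\cdots\otimes a_j^{(d)} \;+\; \gamma_j\, a_{m+1}^{(2)}\otimes\cdots\otimes a_{m+1}^{(d)},
\]
and note that the second term is ``shared'' across all $j$. But neither summand is symmetric in general, so when you replace this by a \emph{symmetric} rank-two expression $S_j=\sum_{\ell=1}^2 \mu_{j\ell}\,p_{j\ell}^{\otimes(d-1)}$ via the rank-$\le 2$ Comon result, there is no reason the $p_{j2}$ (or any term) should coincide across $j$: the shared-tail structure lives in the non-symmetric decomposition and is typically destroyed by the symmetrization. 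Consequently the ``reassembly'' into $m+1$ symmetric terms is not a bookkeeping exercise but the heart of the matter, and your outline does not supply the mechanism that forces the slice decompositions to align. Friedland's proof handles this by a more careful reduction and case analysis (this is also where the $|\mathbb{K}|\ge 3$ hypothesis enters, to guarantee a usable scalar); as written, your proposal stops exactly at the point where the real work begins.
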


It follows from Theorem~\ref{friedland} that, if the symmetric rank is bounded above by the flattening rank plus two, then the rank and symmetric rank coincide: the  alternative is that the rank is strictly less than the symmetric rank, which means it satisfies the hypothesis of the theorem.

\begin{theorem}[The substitution method, e.g. {\cite[\S5.3.1]{L2}}] \label{substitution}
Let $T \in \C^{n_1} \otimes \C^{n_2}  \otimes \C^{n_3}$ be a tensor of rank $r$. We write $T = \sum_{i = 1}^{n_1} e_i \otimes M_i$, where $\{ e_i : 1 \leq i \leq {n_1} \}$ are the elementary basis vectors, and the $M_i$ are $n_2 \times n_3$ matrices, known as the slices of the tensor. Reordering indices to ensure that $M_{n_1} \neq 0$, there exist constants $\lambda_1, \ldots, \lambda_{n_1-1}$ such that the following $(n_1-1) \times n_2 \times n_3$ tensor has rank at most~$r-1$:
$$ \sum_{i=1}^{n_1-1} e_i \otimes (M_i - \lambda_i M_{n_1}) .$$
If the matrix $M_{n_1}$ has rank one, the tensor above has rank exactly $r-1$.
\end{theorem}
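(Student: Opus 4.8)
The plan is to argue directly from a shortest decomposition of $T$. First I would write $T = \sum_{j=1}^{r} a_j \otimes b_j \otimes c_j$ with $a_j \in \C^{n_1}$, $b_j \in \C^{n_2}$, $c_j \in \C^{n_3}$, expand $a_j = \sum_{i=1}^{n_1} a_{ji}\, e_i$, and read off the slices as $M_i = \sum_{j=1}^{r} a_{ji}\, b_j c_j^{\top}$, viewing each $b_j c_j^{\top}$ as an $n_2 \times n_3$ matrix. Since $M_{n_1} = \sum_j a_{j,n_1}\, b_j c_j^{\top}$, the hypothesis $M_{n_1} \neq 0$ forces $a_{j,n_1} \neq 0$ for at least one $j$; after permuting the rank-one summands I may assume $a_{r,n_1} \neq 0$. (This permutation acts on the summation index $j$ and is unrelated to the reordering of slice indices $i$ used in the statement to place a nonzero slice last.)

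Next I would set $\lambda_i := a_{ri}/a_{r,n_1}$ for $i = 1, \dots, n_1 - 1$. Substituting and collecting terms gives
$$\sum_{i=1}^{n_1-1} e_i \otimes (M_i - \lambda_i M_{n_1}) \;=\; \sum_{j=1}^{r} \Big( \textstyle\sum_{i=1}^{n_1-1} (a_{ji} - \lambda_i a_{j,n_1})\, e_i \Big) \otimes b_j \otimes c_j ,$$
and the choice of $\lambda_i$ makes the first tensor factor of the $j = r$ summand vanish. Hence the left-hand side is a sum of $r - 1$ rank-one tensors, so it has rank at most $r - 1$.

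For the final assertion, suppose $M_{n_1}$ has rank one, say $M_{n_1} = b \otimes c$, and write $T'$ for the substituted tensor. Re-expanding $T = \sum_{i=1}^{n_1-1} e_i \otimes M_i + e_{n_1} \otimes M_{n_1}$ and pulling out all $M_{n_1}$ contributions, while identifying $\C^{n_1-1}$ with $\mathrm{span}(e_1, \dots, e_{n_1-1}) \subset \C^{n_1}$, I would obtain $T = T' + v \otimes b \otimes c$ with $v = e_{n_1} + \sum_{i=1}^{n_1-1} \lambda_i\, e_i$. If $T'$ had rank at most $r - 2$, then $T$ would have rank at most $r - 1$, contradicting $\operatorname{rank}(T) = r$; together with the previous paragraph this forces $\operatorname{rank}(T') = r - 1$.

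The argument is essentially bookkeeping with a single fixed decomposition, so I do not anticipate a serious obstacle. The only points requiring care are: that $M_{n_1} \neq 0$ really does supply a usable pivot, i.e.\ some $a_{j,n_1} \neq 0$; keeping the two independent reorderings (of slices versus of rank-one summands) straight; and treating $\C^{n_1-1}$ as a coordinate subspace of $\C^{n_1}$ consistently when comparing the decompositions of $T$ and $T'$.
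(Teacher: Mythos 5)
Your proof is correct and uses essentially the same approach as the paper. The paper cites Landsberg for this complex-rank statement but proves the real analogue (Theorem~\ref{rsubstitution}) by the identical method — expand a shortest rank-one decomposition into slices, reorder so the last summand has nonzero coefficient on the $n_1$-th slice, and choose $\lambda_i$ to cancel that summand — while your writeup additionally spells out the identity $T = T' + v \otimes b \otimes c$ for the ``rank exactly $r-1$'' clause and gives the unnormalized $\lambda_i = a_{ri}/a_{r,n_1}$ (the paper writes $\lambda_i = \mu_{ri}$, implicitly normalizing $\mu_{r,n_1}=1$).
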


In the following four subsections we prove equality of rank and symmetric rank for the family of cubic surfaces in the title. Together the subsections prove Theorem~\ref{rank}. 

\subsection{Cones over cubic curves}

Cones over cubic curves have a natural characterization in terms of tensors: they have sub-generic flattening rank, and parametrize the {\em subspace variety}~\cite[\S7.1]{L} defined by the vanishing of the $4 \times 4$ minors of the flattening. In this section we prove Theorem~\ref{rank} for cubic surfaces with sub-generic flattening rank.
The defining polynomials of such cubic surfaces have a change of basis that removes one of the four variables. 
We change coordinates by an element $M$ of the general linear group $GL_4$ to obtain a tensor $T'$ with non-zero entries only in its upper-left $3 \times 3 \times 3$ block. Its entries are expressed in terms of $T$ and $M$ as
$$ T_{ijk}'= \sum_{a,b,c = 1}^4 T_{abc} M_{ai} M_{bj} M_{ck}  .$$
Rank is invariant under general linear group action, hence $T'$ has the same rank as $T$. Given an expression for $T'$ as a sum of rank one tensors, setting the fourth entry of all vectors that appear in the decomposition to zero gives a valid expression with the same number of terms.

Hence, to study ranks of cones over cubic curves it suffices to study ranks of plane cubic curves as symmetric $3 \times 3 \times 3$  tensors. 
It is known that the rank and symmetric rank agree for cubic curves of sub-generic flattening rank ($2 \times 2 \times 2$ tensors and rank one tensors) e.g. via their normal forms. Cubic curves have a generic flattening rank of three. Theorem~\ref{friedland} says that the rank and symmetric rank coincide provided that the symmetric rank is at most five. The classification of cubic curves 
in~\cite[\S96]{S} shows that five is the maximum possible symmetric rank. This concludes the proof for cones over cubic curves.

\subsection{Non-singular cubic surfaces} \label{non-singular}

Based on the previous subsection, it remains to consider cubic surfaces with flattening rank four. When the rank is at most five, Theorem~\ref{friedland} implies that the rank and symmetric rank coincide.
This leaves the cubic surfaces that are not expressible as a sum of five linear powers, those for which Theorem~\ref{sylv} fails to give a decomposition.
There are two such families of non-singular cubic surfaces, see~\cite[\S94]{S}, with equations
\begin{equation}\label{2.2} \begin{matrix} (x_1^3 + x_2^3 + x_3^3 ) + x_4^2 ( \lambda_1 x_1 +  \lambda_2 x_2 +  \lambda_3 x _3 + \lambda_4 x_4 ) , \\
 \mu_1 x_1^3 + x_2^3 + x_3^3 - 3 x_1 ( \mu_2 x_1 x_2 + x_1 x_3 + x_4^2 ) . \end{matrix} \end{equation}
The parameters $\lambda_i$, $\mu_j$ are arbitrary subject to maintaining non-singularity. The failure of Sylvester's Pentahedral Theorem for these surfaces is due to the non-genericity of their Hessian quartic surface, which has fewer than 10 distinct singular points. 
These cubics have symmetric rank six~\cite[\S97]{S}. The non-symmetric rank cannot be five or less by Theorem~\ref{friedland}.

\subsection{Cubic surfaces with infinitely many singular points}

We begin with the reducible cubic surfaces, followed by the irreducible cubic surfaces with infinitely many singular points. The three normal forms of reducible cubic surfaces are given in~\cite{CGV}. They are $x_1 ( x_1^2 + x_2^2 + x_3^2 + x_4^2)$, $x_1 ( x_2^2 + x_3^2 + x_4^2)$, and $x_1 ( x_1 x_2 + x_3^2 + x_4^2)$. 
The first two have symmetric rank six~\cite{CGV}, hence by Theorem~\ref{friedland} they also have rank six. The third has symmetric rank seven~\cite{S}. We show that the rank of this normal form is seven, and hence that its rank and symmetric rank agree.

\begin{proposition}\label{rank7}
The cubic surface $ f = x_1 ( x_1 x_2 + x_3^2 + x_4^2 )$ has non-symmetric rank seven.
\end{proposition}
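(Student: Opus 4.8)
The plan is to establish the bound from both sides. The upper bound, $\mathrm{rank}(f) \le 7$, should follow by exhibiting an explicit decomposition into seven rank-one tensors; a natural source is the known symmetric rank seven decomposition cited from \cite{S}, which one may simply reuse, or else construct by hand using the disjointness of the ``$x_1 x_2$'' and ``$x_3^2 + x_4^2$'' pieces (each of $x_1 x_2$ and $x_1(x_3^2+x_4^2)$ is cheap to decompose, and one can multiply through). So the real content is the lower bound: $\mathrm{rank}(f) \ge 7$.

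For the lower bound I would apply the substitution method (Theorem~\ref{substitution}) to peel off variables one at a time, tracking how the rank must drop. Write $T$ for the symmetric tensor of $f$ and form its slices $M_1, M_2, M_3, M_4$ with respect to the first tensor factor. Because $f$ only involves $x_1$ multiplicatively (every monomial contains an $x_1$), several slices have very low rank, which is exactly the situation where Theorem~\ref{substitution} gives an \emph{exact} drop of one. First I would substitute to eliminate the index corresponding to a rank-one slice (say the slice attached to $x_2$, since $x_1 x_1 x_2$ contributes a rank-one matrix there), concluding $\mathrm{rank}(T) = 1 + \mathrm{rank}(T')$ for the resulting $3 \times 4 \times 4$ tensor $T'$. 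Then I would iterate: identify another low-rank slice in $T'$, substitute again, and so on, each step either forcing an exact decrement (when the pivot slice has rank one) or at least a decrement by one. The goal is to reduce down to a small tensor — ideally a $2\times n \times n$ or $1 \times n \times n$ tensor, or a tensor whose rank is the flattening rank and hence computable — whose rank I can pin down exactly, and then add back the number of substitution steps.

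The main obstacle is bookkeeping: the substitution method as stated only guarantees $\mathrm{rank}$ drops by \emph{at most} one in general, and by \emph{exactly} one when the pivot slice has rank one, so to get the sharp lower bound $7$ I must ensure that at every stage there genuinely is a rank-one pivot slice available (or otherwise supplement with a flattening-rank argument), and I must verify that the residual small tensor after all substitutions really has the rank I claim rather than something smaller. Concretely, I expect something like: three or four substitution steps, each with a rank-one pivot, reducing $f$ to a tensor whose rank is visibly $7$ minus the number of steps — for instance a tensor supported on $x_3, x_4$ only, corresponding to a binary form or a small matrix pencil whose rank is classical. A subtlety to watch: when I change the first factor's basis to expose a rank-one slice, I may break the symmetry, but that is harmless since the substitution method is a statement about non-symmetric rank, and non-symmetric rank is $GL$-invariant. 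If the direct substitution route stalls, the fallback is to combine a partial substitution (dropping to a $2 \times 4 \times 4$ or $3 \times 4 \times 4$ tensor) with Theorem~\ref{friedland} or with an explicit lower bound on the rank of the residual tensor via its flattening rank together with a case analysis — but I anticipate the clean substitution argument, exploiting that $f/x_1 = x_1 x_2 + x_3^2 + x_4^2$ is a nondegenerate quadric, will suffice.
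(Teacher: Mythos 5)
Your overall strategy coincides with the paper's: bound the rank below via the substitution method, and above via the known symmetric rank from~\cite{S}. But there is a gap in the lower-bound argument as you have sketched it, and the plan as written would stall.

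The rank-one pivot condition is not what the lower bound needs, and insisting on it would block your argument after the first step. Theorem~\ref{substitution} asserts that there \emph{exist} constants $\lambda_i$ for which the residual tensor has rank at most $r-1$; so to conclude $r \geq s+1$ it suffices to check that the residual has rank at least $s$ for \emph{every} choice of the $\lambda_i$. No rank-one pivot enters; the ``exactly $r-1$'' refinement is for the reverse inequality, which you already get for free from the symmetric rank seven. Moreover, a rank-one pivot simply is not available after the first step: once you remove the rank-one slice $M_2$ attached to $x_2$, the modified slices attached to $x_3$ and $x_4$ each have rank two, and no linear combination of the three remaining slices (even after a basis change in the first factor) has rank one. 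The paper's argument sidesteps this by substituting away the linearly independent slices $M_2$, $M_3$, $M_4$ in any order, leaving a $1\times 4\times 4$ residual equal to the matrix $M_1 - aM_2 - bM_3 - cM_4$, and then observing (expand along the second row) that its determinant is identically $-1$, so it has rank $4$ for every $a,b,c$. This gives $r \geq 4 + 3 = 7$. That single computation --- the shifted first slice is never singular --- is the heart of the proof and is absent from your proposal; and your guess that the residual would be ``a tensor supported on $x_3, x_4$ only'' does not match what actually remains, which is the $x_1$-slice shifted by multiples of the others.
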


\begin{proof}
The polynomial $f$ can be written up to scale as the symmetric $4 \times 4 \times 4$ tensor
$$ e_1 \otimes 
\begin{bmatrix}
0 & 1 & 0 & 0 \\ 
1 & 0 & 0 & 0 \\ 
0 & 0 & 1 & 0 \\ 
0 & 0 & 0 & 1 \\ 
\end{bmatrix}
+ e_2 \otimes
\begin{bmatrix}
1 & 0 & 0 & 0 \\ 
0 & 0 & 0 & 0 \\ 
0 & 0 & 0 & 0 \\ 
0 & 0 & 0 & 0 \\ 
\end{bmatrix}
+ e_3 \otimes
\begin{bmatrix}
0 & 0 & 1 & 0 \\ 
0 & 0 & 0 & 0 \\ 
1 & 0 & 0 & 0 \\ 
0 & 0 & 0 & 0 \\ 
\end{bmatrix}
+ e_4 \otimes
\begin{bmatrix}
0 & 0 & 0 & 1 \\
0 & 0 & 0 & 0 \\
0 & 0 & 0 & 0 \\
1 & 0 & 0 & 0 \\
\end{bmatrix} . $$
We apply Theorem~\ref{substitution} iteratively to the second, third, and fourth slices of $f$. The slices are linearly independent $4 \times 4$ matrices. No linear combination of them can be subtracted from the first slice to give a vanishing determinant. These two observations imply that the rank of~$f$ is bounded from below by $4 + 3 = 7$. Since the symmetric rank is seven, the non-symmetric rank cannot exceed seven.
\end{proof}

There are two normal forms of irreducible cubic surfaces with infinitely many singular points~\cite[\S97]{S}, with representatives 
$ x_1 x_2^2 + x_3 x_4^2$,
which has symmetric rank six, and
$ x_1^2 x_2 + x_1 x_3 x_4 + x_3^3 $ with symmetric rank at most seven.
In the former case the non-symmetric rank is also six, using Theorem~\ref{friedland}. In the latter case we follow an approach as in Proposition~\ref{rank7}.

\begin{proposition} \label{rank7no2}
The cubic surface $ x_1^2 x_2 + x_1 x_3 x_4 + x_3^3 $ has non-symmetric rank seven.
\end{proposition}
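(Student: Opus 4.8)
The plan is to mimic the proof of Proposition~\ref{rank7}: exhibit $f = x_1^2 x_2 + x_1 x_3 x_4 + x_3^3$ as a symmetric $4 \times 4 \times 4$ tensor, write it in slice form $T = \sum_{i=1}^4 e_i \otimes M_i$, and apply the substitution method (Theorem~\ref{substitution}) repeatedly to peel off slices, obtaining a lower bound of $7$ on the non-symmetric rank. Since the symmetric rank is known to be at most seven by \cite[\S97]{S}, and non-symmetric rank never exceeds symmetric rank, this forces both to equal seven. First I would compute the slices: partial differentiation (or direct reading of the coefficients) gives, up to scaling the polynomial so that entries are integers, a first slice $M_1$ supported on the $(1,2),(2,1)$ and $(3,4),(4,3)$ positions (from $x_1^2x_2$ and $x_1x_3x_4$), a second slice $M_2$ with a single nonzero entry in position $(1,1)$ (from $x_1^2x_2$), a third slice $M_3$ with entries at $(1,4),(4,1)$ (from $x_1x_3x_4$) and at $(3,3)$ (from $x_3^3$), and a fourth slice $M_4$ with entries at $(1,3),(3,1)$ (from $x_1x_3x_4$).

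Next I would check the two hypotheses needed to run the substitution argument to completion. One needs to verify that the slices $M_2, M_3, M_4$ are linearly independent (so that three successive substitution steps each strictly drop the rank), and that after removing suitable multiples of these three slices from $M_1$, the resulting matrix still has full rank $4$ — equivalently, that no linear combination $M_1 - \alpha M_2 - \beta M_3 - \gamma M_4$ is singular. Examining the support, $M_2$ only touches the $(1,1)$ entry, $M_3$ adds a $(3,3)$ entry and alters the $(1,4)$ block, and $M_4$ alters the $(1,3)$ block; the $(2,1)$-type entry of $M_1$ coming from $x_1^2 x_2$ is untouched by any of these, and one checks that a determinant expansion along the relevant rows/columns always retains a nonzero monomial. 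Since $M_2$ (and $M_3$, $M_4$ after reordering) can be taken to have rank one, each substitution step lowers the rank by exactly one, yielding $\operatorname{rank}(T) \geq 4 + 3 = 7$.

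The main obstacle is the bookkeeping of the determinant non-vanishing claim: unlike Proposition~\ref{rank7}, where the first slice was a permutation-type matrix of obvious full rank, here $M_1$ has rank only $2$ on its own, so the rank-$4$ witness genuinely depends on contributions from the combination. I would therefore carefully track which entries are forced to be nonzero after the substitutions and exhibit one explicit nonzero term in the determinant expansion (for instance, a product of four entries, one from each of the still-live support positions) that cannot be cancelled, since the $\alpha,\beta,\gamma$ only affect three specific off-diagonal positions while the determinant monomial I pick uses at least one position — the $x_1^2x_2$ contribution — that no $\lambda_i$ can reach. Once that single monomial is identified the argument closes exactly as in Proposition~\ref{rank7}, with the upper bound supplied by the classification result of \cite{S} and the inequality $\operatorname{rank} \le \operatorname{symrank}$.
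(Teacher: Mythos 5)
Your proposal follows exactly the paper's route: read off the four slices $M_1,\dots,M_4$, apply Theorem~\ref{substitution} three times to peel off the (linearly independent) slices $M_2,M_3,M_4$, and observe that the remaining slice $M_1 - \alpha M_2 - \beta M_3 - \gamma M_4$ is nonsingular for all $\alpha,\beta,\gamma$, so that the rank is at least $4+3=7$; the matching upper bound comes from the symmetric rank being at most seven. Your reading of the slice supports is correct, and the determinant claim does hold: row $2$ and column $2$ of the combined matrix each contain a single nonzero entry (the $\tfrac13$ coming from $x_1^2 x_2$, which none of $M_2,M_3,M_4$ touch), so every nonvanishing permutation term must use $A_{12}A_{21}$, and the only compatible choice on rows and columns $\{3,4\}$ is $A_{34}A_{43} = (\tfrac16)^2$; thus the determinant is the nonzero constant $(\tfrac13)^2(\tfrac16)^2$ independent of $\alpha,\beta,\gamma$.

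One small inaccuracy worth flagging: you assert that $M_3$ and $M_4$ ``can be taken to have rank one,'' but $M_3$ (with nonzero entries at $(1,4),(4,1),(3,3)$) has rank three and $M_4$ (with nonzero entries at $(1,3),(3,1)$) has rank two. This does not harm the argument, because the rank-one hypothesis in Theorem~\ref{substitution} is only needed for the sharper ``exactly $r-1$'' conclusion; the lower bound here only uses the unconditional ``at most $r-1$'' direction, i.e., that each substitution step drops the rank by at most one, so the original rank is at least the rank of the final $4\times 4$ matrix plus the number of substitutions performed. Phrasing the deduction that way removes the dependence on the incorrect rank-one claim and recovers the paper's argument exactly.
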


\begin{proof}
The polynomial $f$ is the symmetric $4 \times 4 \times 4$ tensor
$$ e_1 \otimes 
\begin{bmatrix}
0 & \frac13 & 0 & 0 \\ 
\frac13 & 0 & 0 & 0 \\ 
0 & 0 & 0 & \frac16 \\ 
0 & 0 & \frac16 & 0 \\ 
\end{bmatrix}
+ e_2 \otimes
\begin{bmatrix}
\frac13 & 0 & 0 & 0 \\ 
0 & 0 & 0 & 0 \\ 
0 & 0 & 0 & 0 \\ 
0 & 0 & 0 & 0 \\ 
\end{bmatrix}
+ e_3 \otimes
\begin{bmatrix}
0 & 0 & 0 & \frac16 \\ 
0 & 0 & 0 & 0 \\ 
0 & 0 & 1 & 0 \\ 
\frac16 & 0 & 0 & 0 \\ 
\end{bmatrix}
+ e_4 \otimes
\begin{bmatrix}
0 & 0 & \frac16 & 0 \\
0 & 0 & 0 & 0 \\
\frac16 & 0 & 0 & 0 \\
0 & 0 & 0 & 0 \\
\end{bmatrix} . $$
The second, third, and fourth slices are linearly independent. No linear combination of them can be subtracted from the first slice to give a vanishing determinant. Hence the rank is at least $4+3=7$. The symmetric rank is at most seven hence both ranks are seven.
\end{proof}

\subsection{Cubic surfaces with finitely many singular points}

We introduce a test to show that a cubic surface $f$ has symmetric rank at most five. The test checks that $f$ does not lie on two discriminant loci which contain the tensors of higher rank.
We use it to prove Theorem~\ref{rank} for cubic surfaces with finitely many singular points.

The singular cubic surfaces lie on the discriminant hypersurface~\cite{St}.
Non-singular cubic surface, on the complement of the hypersurface, have symmetric rank at most five unless they are of the form in equation~\eqref{2.2}. The surfaces in~\eqref{2.2} are contained in a second discriminant locus, which we describe. Our test is the following:  if neither discriminant vanishes at $f$, it has symmetric rank at most five.

We now explain how to construct the second discriminant. The determinant of a $4 \times 4$ symmetric matrix of indeterminates defines a hypersurface with 10 singular points, where the $3 \times 3$ minors vanish. The $4 \times 4$ symmetric matrices of linear forms whose determinant hypersurface has fewer than 10 singular points lie on the Hurwitz form of the variety of rank two $4 \times 4$ symmetric matrices~\cite{St2}. Applying~\cite[Theorem 1.1]{St2} shows that the Hurwitz form in this setting is an irreducible hypersurface of degree $30$ in the Pl\"ucker coordinates of codimension six linear spaces, since the sectional genus is six. The Hurwitz form has degree 120 in the coordinates of the indeterminates, since each Pl\"ucker coordinate has degree four.

The Hessian matrix of a cubic surface is a $4 \times 4$ symmetric matrix of linear forms, the second order partial derivatives. The determinant of the matrix is the defining equation of the Hessian surface, which generically has 10 singular points at which the $3 \times 3$ minors of the matrix vanish. 
The cubic surfaces in~\eqref{2.2} are special in that their Hessian surfaces have fewer than 10 distinct singular points. 
Hence they lie on the specialization
of the Hurwitz form above to Hessian matrices of cubic surfaces.
This is a discriminant hypersurface in the space of cubic surfaces, which we call the {\em Hessian discriminant}. It divides the specialization of the Hurwitz form. The above paragraph implies the following.

\begin{proposition}
The Hessian discriminant is a hypersurface of degree at most 120 in the 20 coefficients of the cubic surfaces.
\end{proposition}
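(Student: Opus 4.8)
The plan is to exhibit the Hessian discriminant as a factor of the pullback of the Hurwitz form $\mathrm{Hu}(X)$ along a concrete rational map, where $X \subseteq \P^9 = \P(S^2\C^4)$ is the variety of symmetric $4 \times 4$ matrices of rank at most two, and then to read off degrees. First I would make the relevant map explicit. The Hessian matrix $H_f = (\partial^2 f / \partial x_i \partial x_j)_{ij}$ is a symmetric $4 \times 4$ matrix of linear forms, so we may write $H_f = x_1 B_1 + x_2 B_2 + x_3 B_3 + x_4 B_4$ with $B_k = (\partial^3 f / \partial x_i \partial x_j \partial x_k)_{ij} \in S^2\C^4$; each entry of each $B_k$ is a constant multiple of one of the $20$ coefficients of $f$, so $f \mapsto (B_1, \dots, B_4)$ is linear. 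Arranging the $B_k$ as the columns of a $10 \times 4$ matrix $B(f)$, for generic $f$ this matrix has rank $4$, its column span is a codimension-six linear subspace $L_f \subseteq \P^9$, and the assignment $\phi \colon \P^{19} \dashrightarrow \mathrm{Gr}(4,10)$, $f \mapsto L_f$, is given in Plücker coordinates by the $4 \times 4$ minors of $B(f)$, each homogeneous of degree $4$ in the coefficients $c_{ijk}$.

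The next step is the geometric identification: the singular points of the Hessian surface $\{\det H_f = 0\} \subseteq \P^3$ coincide set-theoretically with $L_f \cap X$. This rests on two standard facts — that the symmetric determinantal cubic hypersurface $\{\det = 0\} \subseteq \P^9$ is singular exactly along the rank $\leq 2$ locus $X$, since its gradient is the adjugate matrix whose entries are the $3 \times 3$ minors, and the chain rule for the linear embedding $L_f \hookrightarrow \P^9$ — together with a genericity argument that $L_f$ is not contained in a tangent hyperplane of $\{\det = 0\}$. Consequently the Hessian surface of $f$ has fewer than $10 = \deg X$ distinct singular points precisely when $L_f$ meets $X$ in fewer than $\deg X$ reduced points, i.e. when $L_f$ lies on $\mathrm{Hu}(X)$; in particular the cubics in \eqref{2.2}, and hence every cubic on the Hessian discriminant, pull back into $\mathrm{Hu}(X)$. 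Combining degrees then finishes the argument: by \cite[Theorem 1.1]{St2} the Hurwitz form has degree $30$ in the Plücker coordinates, and since each Plücker coordinate specializes along $\phi$ to a polynomial of degree $4$ in the $c_{ijk}$, the specialization $\phi^* \mathrm{Hu}(X)$ has degree $30 \cdot 4 = 120$. The Hessian discriminant divides this specialization, so it has degree at most $120$; it is moreover a genuine proper hypersurface, since a generic cubic surface — for instance a generic sum of five cubes, by Theorem~\ref{sylv} — has a Hessian surface with exactly ten nodes, so the specialization of the Hurwitz form is not identically zero.

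The step I expect to be the main obstacle is the middle one: making the equivalence "fewer than ten singular points of the Hessian surface $\iff L_f \in \mathrm{Hu}(X)$" precise. One must control the several ways the intersection $L_f \cap X$ can degenerate — a multiple intersection point, an excess-dimensional component, or $L_f$ slipping into a tangent hyperplane of the determinant hypersurface — and check that each is absorbed by membership in the Hurwitz form; and since for special $f$ the form $\det H_f$ may fail to be square-free, or the $B_k$ may fail to span a four-dimensional space, one must also verify that $\phi$ extends across (or that the Hessian discriminant avoids) this indeterminacy locus, or else argue on a dense open set and pass to closures. Once that is settled, the degree bookkeeping, and the fact that we obtain only an upper bound of $120$ rather than an equality, are routine.
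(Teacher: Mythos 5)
Your argument is the same as the paper's: specialize the Hurwitz form of the rank-two symmetric determinantal variety $X \subseteq \P^9$, of degree $30$ in Plücker coordinates, along the map sending a cubic $f$ to the linear span of its Hessian slices, and use that each Plücker coordinate has degree $4$ in the coefficients of $f$ to get the bound $30\cdot 4 = 120$. You spell out the identification of singular points of the Hessian surface with $L_f \cap X$ and the degeneracy issues much more carefully than the paper does, but the route is identical.
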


We obtain a test for a cubic surface having symmetric rank at most six as follows.
If there exists a linear form $l$ such that $f + l^3$ has symmetric rank at most five, then $f$ has symmetric rank at most six. 
To check that $f$ has symmetric rank at most six, it suffices to check that neither discriminant vanishes identically on the set of cubic surfaces of the form $f + l^3$, as $l$ ranges over the linear forms.
We first prove this for the discriminant of singular cubics via the following result, which is stated without proof in~\cite[\S97]{S}.

\begin{lemma}\label{finitely}
Let $f \in \C[x_1, x_2, x_3, x_4]$ be a cubic surface with finitely many singular points. Then for a generic linear form $l \in \C[x_1, x_2, x_3, x_4]$ the cubic surface $f + l^3$ is non-singular.
\end{lemma}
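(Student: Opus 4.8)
The plan is to show that the cubic surfaces $f + l^3$, as $l$ ranges over linear forms, are not all singular, i.e.\ that the discriminant hypersurface does not contain the line (in fact $3$-plane) $\{f + l^3 : l \text{ linear}\}$ through $f$. Equivalently, I want to produce a single linear form $l$ for which $f + l^3$ is non-singular, since the singular cubics form a proper closed subvariety, so the non-singular locus intersected with the family $f + l^3$ is either empty or dense. So the whole statement reduces to: the family $\{f + l^3\}$ is not entirely contained in the discriminant.

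First I would set up the incidence/dimension count. The hypothesis is that $\mathrm{Sing}(f)$, the common zero locus in $\P^3$ of the four partials $\partial f/\partial x_i$, is finite — say it consists of points $p_1, \dots, p_m$. A point $q \in \P^3$ is singular on $f + l^3$ precisely when all four partials $\partial f/\partial x_i + 3 l^2 (\partial l/\partial x_i)$ vanish at $q$; writing $l = \sum a_i x_i$, this is the system $\partial_i f(q) + 3 l(q)^2 a_i = 0$ for $i = 1,\dots,4$. The key observation is that this forces the gradient vector $\nabla f(q)$ to be proportional to the coefficient vector $a = (a_1,\dots,a_4)$ of $l$ — so either $\nabla f(q) = 0$ (i.e.\ $q$ is already a singular point of $f$) or $\nabla f(q)$ is a nonzero scalar multiple of $a$. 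I would handle these two cases separately.

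For the first case, $q \in \{p_1,\dots,p_m\}$: then the condition on $l$ becomes $3 l(q)^2 a_i = 0$ for all $i$, i.e.\ $l(q) = 0$ (as $a \neq 0$ for $l \neq 0$). So each $p_j$ contributes only the hyperplane $\{l : l(p_j) = 0\}$ in the $4$-dimensional space of linear forms; these $m$ hyperplanes cover a proper closed subset. For the second case, $\nabla f(q) = \lambda a$ with $\lambda \neq 0$: then from the system, $l(q)^2 = -\lambda/3 \neq 0$, and $q$ must lie on the locus where the $2\times 2$ minors of the $2 \times 4$ matrix with rows $(x_1,x_2,x_3,x_4)$... no — rather, $q$ ranges over the image of the rational Gauss-type map sending $q$ to the direction of $\nabla f(q)$, and for each such $q$ the linear form $l$ is pinned down (up to the finitely many square roots of $-\lambda/3$ and scaling) to have coefficient vector proportional to $\nabla f(q)$ with a fixed value of $l(q)$. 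The set of $l$ arising this way is the image of a constructible set of dimension at most $\dim\{q \in \P^3 : \text{stuff}\} \leq 3$ under a map whose fibers are positive-dimensional (scaling $q$ along... actually the relevant count: the pairs $(q,l)$ with $l$ a nonzero multiple of $\nabla f(q)$ form a family of dimension $\leq 3$ over $q \in \P^3$, minus the condition fixing $l(q)^2$, giving dimension $\leq 3$, hence their image in $\P^3_l$ has dimension $\leq 3$ — which is not automatically a proper subset).

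The main obstacle is therefore ruling out that this second constructible set is all of the $\P^3$ of linear forms. I expect to resolve it as follows: if for \emph{every} linear form $l$ there were a point $q$ with $\nabla f(q) \parallel a$ and $l(q)^2 = -\lambda/3$, one could take $l$ generic and derive that $f$ has a one-parameter family of singular points after perturbation, contradicting... — cleaner is to argue directly that a \emph{generic} $l$ avoids the bad locus by a tangent-space/transversality computation: the differential of the "Hessian-type" condition is surjective at a generic point because $\mathrm{Sing}(f)$ being finite means the partials $\partial_i f$ have no common component, so the Jacobian criterion gives that the map $l \mapsto (\text{singular locus of } f + l^3)$ is dominant onto the non-singular stratum. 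Concretely I would exhibit one good $l$: pick coordinates so that $f$'s singular points are at known locations, and choose $l$ a generic linear form vanishing at none of the $p_j$ and such that the perturbed partials have no common zero — verifying this reduces to a resultant/elimination computation that is nonzero for generic coefficients of $l$, which suffices by semicontinuity. I would present the dimension bound argument as the main line and invoke genericity (Bertini-type, via the base locus of the linear system $\{f + l^3\}$ being $\mathrm{Sing}(f)$, finite) to close the gap.
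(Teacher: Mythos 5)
Your overall plan is the right one, and the split into two cases --- $q \in \mathrm{Sing}(f)$ versus $q \notin \mathrm{Sing}(f)$ --- matches the paper's proof exactly, as does the key observation that a singular point $q$ of $f + l^3$ with $l(q)\neq 0$ forces $[\nabla f(q)] = [l]$ in $\P^3$. Case 1 is handled correctly. But in Case 2 you project into the wrong space and then correctly notice your bound is inconclusive, after which the proposed rescue (``transversality,'' ``Jacobian criterion,'' ``resultant nonzero for generic coefficients'') is too vague to count as a proof.

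Here is where the argument actually goes wrong. You bound the image in $\P^3_{l}$, the \emph{projectivization} of the space of linear forms, and get dimension $\leq 3$ --- which, as you point out, need not be proper in $\P^3$. But the parameter space for the family $\{f + l^3\}$ is the \emph{affine} space $\C^4$ of linear forms (rescaling $l$ to $\lambda l$ changes the cubic to $f + \lambda^3 l^3$), so that is the space in which you must show the bad locus is proper, and it is $4$-dimensional. Moreover, your fibre count over $q$ is off: you treated the scalar multiple of $l$ as free except for one condition, when in fact it is pinned down to finitely many values. Concretely, for $q$ with $\nabla f(q)\neq 0$ and $f(q)\neq 0$, write $l = \mu\,\nabla f(q)$; by Euler's relation $l(q) = 3\mu f(q)$, and substituting into $\partial_i f(q) + 3\,l(q)^2 l_i = 0$ gives $\partial_i f(q)\bigl(1 + 27\mu^3 f(q)^2\bigr) = 0$, so $\mu^3 = -1/(27 f(q)^2)$ --- three choices. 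And if $f(q) = 0$ then $l(q) = 0$ (Euler again), which is excluded in Case 2. So the incidence variety $\{(q,l) : l(q)\neq 0,\ \nabla(f + l^3)(q) = 0\}$ has finite fibres over $\P^3_q$, hence dimension $\leq 3 < 4 = \dim \C^4$, and the projection to $\C^4_l$ is not dominant. Combined with Case 1 (finitely many hyperplanes, dimension $3$), this closes the argument with no Bertini-type appeal needed. The paper's version is phrased slightly differently --- it uses the constraint $g(q) = 0$ to restrict $q$ to a surface and bound the relevant Gauss image by dimension $2$ --- but the underlying dimension count is the same, and the fix above is what your write-up is missing.
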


\begin{proof}
A generic $l$ satisfies $l(p) \neq 0$ at all singular points of $f$, since the plane perpendicular to the coefficients of $l$ needs to avoid finitely many points. A singular point of $g = f + l^3$ at which $l(p) \neq 0$ must satisfy $g(p) = 0$, and
$ \left( \frac{\partial f}{\partial x_1}|_p : \frac{\partial f}{\partial x_2}|_p : \frac{\partial f}{\partial x_3}|_p : \frac{\partial f}{\partial x_4}|_p \right) = (l_1 : l_2 : l_3 : l_4 ) $.
The partial derivatives of $f$ as $p$ varies over $g = 0$ parametrize a subset of $\P^3$ of dimension at most two. Hence for generic~$l$ this equation will not be satisfied at any $p$ on the surface~$g$.
\end{proof}

\begin{remark}
Lemma~\ref{finitely} can fail for surfaces with infinitely many singular points, such as $x_1(x_1 x_2 + x_3^2 + x_4^2)$ from Proposition~\ref{rank7}. It is singular at $(x_1 : x_2 : x_3 : x_4) = (0 : t_1 : t_2 : \pm i t_2 )$ for $(t_1 : t_2) \in \P^1$. Every linear form $l$ vanishes at a non-zero singular point of $f$ and at that point $f + l^3$ is also singular. 
\end{remark}

We now prove the following result concerning the Hessian discriminant, which uses computations in the computer algebra systems Macaulay2, Magma and Maple.

\begin{lemma} \label{disc2}
For all cubic surfaces with finitely many singular points, except those of singularity type $E_6$, there exists a linear form $l$ such that $f + l^3$ does not lie on the Hessian discriminant.
\end{lemma}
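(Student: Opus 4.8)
The plan is to reduce the statement to a finite computation over the classification of singular cubic surfaces by singularity type. The surfaces with finitely many isolated singularities fall into a finite list of ADE-type normal forms, given classically by Schläfli and recorded in~\cite{S}; each appears in a family depending on finitely many parameters. For each such type I would exhibit (or verify the existence of) a linear form $l$ for which $f + l^3$ avoids the Hessian discriminant, i.e. for which the Hessian surface of $f + l^3$ has the full complement of $10$ distinct singular points. Since the Hessian discriminant is a proper closed subvariety of the space of cubic surfaces (it is the specialization of the Hurwitz form to Hessian matrices, hence a hypersurface of degree at most $120$ by the preceding Proposition, and it is not the whole space because a generic cubic has a generic Hessian), the set of $l$ with $f + l^3$ on the discriminant is a proper closed subset of the space of linear forms, unless the entire line $\{f + l^3 : l\}$ happens to lie inside the discriminant. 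So the content of the lemma is exactly: for $f$ of each admissible singularity type, the affine-linear slice $\{f + l^3\}$ is \emph{not} contained in the Hessian discriminant, and the $E_6$ type is the sole exception.

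The key steps, in order, are: (1) list the normal forms of cubic surfaces with isolated singularities organized by singularity type (following~\cite[\S97]{S}), using parameters for the continuous moduli; (2) for a generic choice of those parameters and a generic linear form $l$, form $f + l^3$, compute its Hessian matrix, take the determinant to get the Hessian surface, and count its singular points by computing the zero-dimensional ideal generated by the $3\times 3$ minors of the Hessian matrix (or by the Jacobian of the Hessian determinant) — this is the Macaulay2/Magma/Maple computation referred to in the statement; (3) check that this count is $10$ for at least one $(l,\text{parameters})$, which by upper semicontinuity of the number of singular points (or rather lower semicontinuity of "having $<10$") shows $f + l^3$ generically avoids the discriminant; (4) treat the remaining cases where the generic parameter choice does not immediately work — in particular mild degenerations and surfaces with several singular points — by hand or by a more careful specialization, and finally (5) verify that for $f$ of type $E_6$ every $l$ gives $f+l^3$ on the Hessian discriminant, so the exception is genuine and necessary.

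The main obstacle I expect is step (4) together with (5): the computation for a single generic representative of each type is routine for a computer algebra system, but handling \emph{all} surfaces of a given type — not just the generic one in its family — requires either a stratification-by-type argument showing the Hessian-point count is constant (or only drops on a lower stratum already handled separately) or an explicit uniform choice of $l$ depending polynomially on the parameters. The $E_6$ case is delicate because one must show a \emph{negative} statement uniformly in $l$: that $f + l^3$ has a degenerate Hessian for \emph{every} linear form. I would approach this by writing the $E_6$ normal form explicitly, parametrizing $l$ with four indeterminate coefficients, forming the $3\times 3$ minors of the Hessian of $f+l^3$, and checking in Macaulay2 that the resulting ideal (with the $l$-coefficients adjoined) still cuts out fewer than $10$ points generically — equivalently, that the specialized Hurwitz-form equation vanishes identically on the whole $l$-family. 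This is the one step where the computer algebra output is doing essential work beyond a routine check.
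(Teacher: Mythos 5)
Your overall strategy matches the paper's: reduce to a finite classification of singularity types, then verify computationally that $f + l^3$ has a Hessian with $10$ distinct singular points for some $l$. But two points in your plan diverge from what is actually needed or actually done.

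First, your step (5) over-reaches. The lemma claims only an existence statement for the non-$E_6$ types; it does \emph{not} assert that every $l$ fails for $E_6$. Excluding $E_6$ from the lemma merely means the argument is not carried out for that type — and indeed the paper never proves the negative statement you propose. Instead, the $E_6$ case is handled separately and directly after the lemma, by exhibiting an explicit decomposition of the $E_6$ normal form $x_1^2 x_4 + x_1 x_3^2 + x_2^3$ as a sum of six cubes of linear forms, which establishes symmetric rank at most six without invoking the discriminant test at all. Trying to prove the uniform failure for $E_6$ would be extra work with no payoff.

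Second, your handling of the parametric families (your step (4)) is the genuine crux and is where your plan is vaguest. You suggest either a stratification argument or a uniform polynomial choice of $l$, but neither is what the paper does, and a stratification argument could easily be circular (the lower strata are themselves singularity types you need to handle). The paper's concrete strategy is as follows: the classification of [BW, Sch] plus the database of singular cubics yields $20$ classes, of which $13$ have a single normal form (handled by one Macaulay2 run with a random $l$) and $7$ depend on parameters $\rho$. For each parametric family, sample several fixed linear forms $l_1, \ldots, l_k$, compute the Hessian discriminant of $f(\rho) + l_i^3$ as a polynomial in $\rho$, and show the simultaneous vanishing locus in $\rho$-space is empty (or, if merely finite, then check those finitely many parameter values individually as single normal forms). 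The choice $l = 0$ often suffices to eliminate most of the bad locus outright. This explicit intersection-of-failure-loci argument is what makes the lemma go through for every member of each family, not just a generic one, and you would need to make that precise before your proposal would constitute a proof. You also cite only Segre's classification; the paper leans on the finer classification in Bruce--Wall and Schmitt, without which the explicit normal-form lists (and hence the finite computation) are not available.
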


\begin{proof}
We refer to the classification of cubic surfaces with finitely many singular points in~\cite{BW,Sch,singsurf}. There are infinitely many normal forms, which fall into~20 classes according to the structure of the singularities. Thirteen classes have a single normal form representative. For these, we compute in Macaulay2 the ideal of singular points of the Hessian of $f + l^3$ for random linear form $l$. For 12 classes, all except singularity type $E_6$, this computation gives an ideal of degree 10 and $f+l^3$ does not lie on the Hessian discriminant.

It remains to consider the seven classes from~\cite[Theorem 2]{Sch} which are given in terms of parameters, $f = f(\rho)$. We sample linear forms $l_i$ and compute the discriminant of $f(\rho) + l_i^3$. This gives a polynomial condition in the parameters which vanishes when $f(\rho) + l_i^3$ lies on the Hessian discriminant. We consider sufficiently many linear forms, in order that there does not exist a choice of parameters such that the Hessian discriminant vanishes at $f(\rho) + l_i^3$ for all linear forms in the sample. We choose linear forms for which the computation to form the discriminant is not prohibitively slow. 
We construct the discriminant using Macaulay2 or Maple, and check that no parameters satisfy all discriminants using Macaulay2 or Magma. 

Often a good choice of linear form is $l= 0$; if the Hessian discriminant does not vanish at $f$ then it also does not vanish at $f + l^3$ when $l$ has sufficiently small coefficients. 
In some cases we consider enough linear forms such that the Hessian discriminant only vanishes at all $f(\rho) + l_i^3$ for a finite number of parameters~$\rho$, and then we check the remaining parameter values one by one in the same way as for the single normal form representatives.
\end{proof}

It remains to consider the singularity type $E_6$, with normal form $x_1^2 x_4  + x_1 x_3^2 + x_2^3$.
Here we show that the symmetric rank is at most six directly, since the normal form can be re-written as a sum of six linear powers,
$$ \frac16 x_4^3 + \frac16 (2 x_1 + x_4)^3 - \frac13 ( x_1 + x_4)^3 + x_2^3 - \frac12 ( x_1 + \frac{i}{\sqrt{3}} x_3)^3 - \frac12 ( x_1 - \frac{i}{\sqrt{3}} x_3)^3 .$$ 
Hence we have proved the following.

\begin{theorem}
Cubic surfaces with finitely many singular points have symmetric rank at most six.
\end{theorem}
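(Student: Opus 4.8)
The plan is to feed Lemmas~\ref{finitely} and~\ref{disc2} into the test for symmetric rank at most six set up in this subsection, and to dispose of the one exceptional family, singularity type $E_6$, by the explicit six-term decomposition already exhibited. Recall the shape of the test: a cubic surface $g$ that is non-singular and does not lie on the Hessian discriminant is neither on the discriminant of singular cubics nor of the form~\eqref{2.2}, hence has symmetric rank at most five by the classification of non-singular cubic surfaces in~\cite[\S94]{S}; so a cubic surface $f$ has symmetric rank at most six as soon as one can produce a linear form $l$ with $f + l^3$ non-singular and off the Hessian discriminant.

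First I would treat a cubic surface $f$ with finitely many singular points that is not of type $E_6$. By Lemma~\ref{finitely} the set $U_1$ of linear forms $l$ for which $f + l^3$ is non-singular contains a nonempty Zariski-open subset of the space of linear forms; as that space is irreducible, $U_1$ is dense. By Lemma~\ref{disc2} the set $U_2$ of linear forms $l$ for which $f + l^3$ avoids the Hessian discriminant is nonempty, and since that discriminant is a Zariski-closed hypersurface and $l \mapsto f + l^3$ is a morphism, $U_2$ is open, hence also dense. Therefore $U_1 \cap U_2$ is nonempty, and any $l$ in the intersection makes $f + l^3$ a non-singular cubic off the Hessian discriminant; the test then gives symmetric rank at most six for $f$.

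It remains to handle singularity type $E_6$, normal form $x_1^2 x_4 + x_1 x_3^2 + x_2^3$, for which the discriminant test does not apply: here I would simply verify the explicit identity displayed above, expressing the normal form as a sum of six cubes of linear forms, by expanding the right-hand side. Combining the two cases proves the theorem. The only delicate point in the argument proper is the upgrade from the existential conclusion of Lemma~\ref{disc2} to density of $U_2$, which is needed in order to intersect it with $U_1$; this is immediate once one notes that the Hessian discriminant is cut out by a single polynomial. The substantive work --- that a generic translate $f + l^3$ escapes the Hessian discriminant for every relevant normal form, together with the isolation of $E_6$ as the sole surface resisting this --- has already been absorbed into Lemma~\ref{disc2} and its computer algebra computations, so no further obstacle remains.
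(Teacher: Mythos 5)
Your argument is correct and follows the paper's approach: combine Lemma~\ref{finitely} and Lemma~\ref{disc2} via the test, using openness of the complement of each discriminant (and irreducibility of the space of linear forms) to find a single $l$ making $f + l^3$ non-singular and off the Hessian discriminant, with the $E_6$ case dispatched by the explicit six-cube identity. The paper leaves the density step implicit (``neither discriminant vanishes identically''), and your spelling out that $U_2$ is open because the Hessian discriminant is a hypersurface and $l \mapsto f + l^3$ is a morphism is exactly the observation needed to close that gap.
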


When the symmetric rank is at most six, the equality of rank and symmetric rank follows from Theorem~\ref{friedland}, hence this concludes our proof of Theorem~\ref{rank}. 
To conclude the section we prove Corollary~\ref{corrank}.

\begin{proof}[Proof of Corollary~\ref{corrank}]
By Theorem~\ref{rank}, it remains to consider tensors of flattening rank five or more. By Theorem~\ref{friedland}, the rank and symmetric rank agree when the rank is at most the flattening rank plus one. Hence they agree up to rank six, and symmetric rank seven.
\end{proof}

\section{Border ranks of cubic surfaces} \label{cpxborder}

The set of rank one $n \times n \times n$ tensors and the set of rank one $n \times n \times n$ symmetric tensors, up to scale, are respectively the Segre and Veronese varieties in complex projective space. We denote them by
$$ S_n := {\rm Seg} (\P^{n-1} \times \P^{n-1} \times \P^{n-1}) \qquad \text{and} \qquad V_n := \nu_3 (\P^{n-1}).$$
The $r$th secant variety $\sigma_r(S_n)$ consists of all tensors of non-symmetric border rank at most~$r$. Likewise $\sigma_r(V_n)$ consists of all tensors of symmetric border rank at most $r$~\cite{L}. The linear subspace of symmetric tensors inside $\C^n \otimes \C^n \otimes \C^n$ is denoted $L_n$.

We prove Theorem~\ref{border}, that the border rank and symmetric border rank agree for cubic surfaces, by establishing the equality of 
$\sigma_r(V_4)$ and $\sigma_r(S_4) \cap L_4$ for all~$r$.
For symmetric $n \times n \times n$ tensors, we prove Corollary~\ref{corbor}, that the border rank and symmetric border rank agree up to symmetric border rank five, by showing that 
$ \sigma_r(V_n) = \sigma_r(S_n) \cap L_n$ for all $n$ and whenever $r \leq 4$.

\subsection{Border ranks of cones over cubic curves}

As for the rank result, we begin by considering cones over cubic curves. For such surfaces, we can apply a symmetric change of basis to ensure that only the top-left $3 \times 3 \times 3$ block contains non-zero entries. The tensors in any approximating sequence can always be chosen to have this property, hence it suffices to consider cubic curves.
The space of cubic curves is 10-dimensional. The secant varieties of the Veronese variety $V_3 = \nu_3 ( \P^2)$ are not defective, by the Alexander-Hirschowitz Theorem~\cite{AH}. The dimensions are 
$$ {\rm dim}(V_3) = 2, \quad \rm{dim}(\sigma_2(V_3)) = 5 \quad {\rm dim}(\sigma_3(V_3)) =8, \quad \rm{dim}(\sigma_4(V_3)) = 10.$$
Since the fourth secant variety fills the space $S^3(\C^3)$, cubic curves have border rank $\leq 4$.

\begin{lemma}\label{curves}
The border rank and symmetric border rank of cubic curves coincide.
\end{lemma}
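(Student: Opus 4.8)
The plan is to prove $\sigma_r(V_3)=\sigma_r(S_3)\cap L_3$ for every $r$, which is exactly the claim. Since any symmetric decomposition $\sum_i\lambda_i u_i^{\otimes 3}$ is in particular a non-symmetric one, the inclusion $\sigma_r(V_3)\subseteq\sigma_r(S_3)\cap L_3$ always holds, so it suffices to prove $\sigma_r(S_3)\cap L_3\subseteq\sigma_r(V_3)$ in each degree. The dimension count above gives $\sigma_4(V_3)=S^3(\C^3)$, so for $r\ge 4$ the right-hand side is the whole space and there is nothing to do. For $r=1$, a symmetric rank-one tensor $u\otimes v\otimes w$ forces $u,v,w$ to be pairwise proportional, hence lies on $V_3$; this is classical. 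So the content is in the cases $r=2$ and $r=3$.

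For $r=2$ I would argue through the flattening. If $T\in\sigma_2(S_3)$ then its flattening rank is at most two, because flattening rank is lower semicontinuous and is bounded above by the border rank. A symmetric tensor of flattening rank at most two is, after a symmetric change of coordinates, supported on a $2\times 2\times 2$ block, so it corresponds to a binary cubic; and every binary cubic has symmetric border rank at most two, since $\sigma_2(\nu_3(\P^1))$ fills $S^3(\C^2)$ (it has the expected dimension $2\cdot 1+1=3$ and $\nu_3(\P^1)$ is not defective). Hence $T\in\sigma_2(V_3)$. The same reasoning gives $\sigma_2(S_n)\cap L_n=\sigma_2(V_n)$ for all $n$, and it reduces the case $r=3$ below to tensors that are not cones.

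The case $r=3$ is the heart of the matter. Let $T\in\sigma_3(S_3)\cap L_3$. If the flattening rank of $T$ is at most two, the previous paragraph already puts $T$ in $\sigma_2(V_3)\subseteq\sigma_3(V_3)$, so assume $T$ is concise, with symmetric slices $M_1,M_2,M_3$. When $T$ is concise a generic linear combination of the $M_i$ is invertible (otherwise the $M_i$ would have a common kernel vector, or would be simultaneously divisible by a linear form, each forcing $T$ to be non-concise); normalize so that $M_1$ is invertible. By Strassen's equations for border rank at most three in $\C^3\otimes\C^3\otimes\C^3$ (see e.g.~\cite{L}), membership of $T$ in $\sigma_3(S_3)$ forces the operators $M_1^{-1}M_2$ and $M_1^{-1}M_3$ to commute. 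Since the $M_i$ are symmetric, these form a commuting pair of operators that are self-adjoint for the nondegenerate symmetric form $M_1$; in the semisimple case they are simultaneously diagonalized in an $M_1$-orthonormal basis, and in that basis $T=\sum_{i=1}^3 c_i\,l_i^{\otimes 3}$, so $T$ has symmetric rank at most three. The non-semisimple configurations lie in the boundary; I would dispose of them either by a limiting argument within $\sigma_3(S_3)\cap L_3$, or, more robustly, by the classical identification of the restriction of Strassen's degree-nine equation to the symmetric locus $L_3$ with a power of the Aronhold invariant, whose zero set is exactly the (degree four) hypersurface $\sigma_3(V_3)$; this gives $\sigma_3(S_3)\cap L_3\subseteq\sigma_3(V_3)$ at once.

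The step I expect to be the main obstacle is precisely this last one in the $r=3$ case: upgrading "the normalized pencil of slices commutes" to "$T$ has symmetric border rank at most three" for non-semisimple pencils, equivalently ruling out that $\sigma_3(S_3)\cap L_3$ could be a hypersurface strictly larger than $\sigma_3(V_3)$. I expect to settle it via Strassen's equations together with their known specialization to the Aronhold invariant. Everything else — the reductions, the cases $r\le 2$ and $r\ge 4$, and the reduction to concise tensors — should be routine.
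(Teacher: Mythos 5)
Your proposal is correct and follows essentially the same route as the paper: for each $r\le 4$ one shows $\sigma_r(S_3)\cap L_3\subseteq\sigma_r(V_3)$ by matching the known equations, with the crux being that Strassen's commutation equations restrict on the symmetric locus to the Aronhold quartic cutting out $\sigma_3(V_3)$ (the paper cites this as~\cite[Exercise 3.10.1.2]{L}), and $\sigma_4(V_3)$ already fills $S^3(\C^3)$ by Alexander--Hirschowitz. Your treatment of $r=2$ via the flattening-rank/subspace-variety reduction to binary cubics is a mildly more geometric rephrasing of the paper's ``$3\times3$ minors restrict to catalecticant minors,'' and your commuting-pencil discussion for $r=3$ is extra motivation that in the end defers, as the paper does, to the Strassen--Aronhold identification.
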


\begin{proof}
We compare the equations defining the secant variety $\sigma_r(V_3)$ with the symmetric restriction of the equations defining the non-symmetric secant $\sigma_r(S_3)$, for $1 \leq r \leq 4$.
The equations defining the Segre variety $S_3$ are the $2 \times 2$ minors of all flattenings. Restricting these equations to symmetric tensors gives the equations defining $V_3$, the $2 \times 2$ minors of the most symmetric catalecticant. Similarly $\sigma_2(S_3)$ is given by the vanishing of the $3 \times 3$ minors of the flattenings. Restricting to symmetric tensors, we get the equations for $\sigma_2(V_3)$, the $3 \times 3$ minors of the most symmetric catalecticant. The equations defining $\sigma_3 (S_3)$ are Strassen's commuting conditions. Restricting these to symmetric tensors recovers the Aronhold invariant which defines $\sigma_3(V_3)$, see~\cite[Exercise 3.10.1.2]{L}.

Cubic curves outside $\sigma_3(V_3)$ have non-symmetric border rank at least four, as they do not lie in the symmetric restriction of $\sigma_3(S_3)$. Their non-symmetric border rank cannot exceed their symmetric border rank, so the non-symmetric border rank must be exactly four. \end{proof}

\subsection{Symmetric salmon equations}

Finding ideal generators for the secant variety $\sigma_4 (S_4)$ is the salmon conjecture.
In~\cite{BO,FG}, set-theoretic equations for the variety are found, although ideal-theoretic equations are not known. Here we obtain the prime ideal for $\sigma_4(V_4)$, a {`symmetric salmon'} result.

The description for the set $\sigma_4(S_4)$ consists of equations in degrees five, six and nine. The degree five equations make a 1728-dimensional module. Restricting the equations in this module to symmetric tensors yields 36 linearly independent quintics in the coefficients of the cubic surfaces which vanish on the set $\sigma_4(V_4)$. One of the quintics is
\begin{tiny} $$ \begin{matrix} 
16c_{1002}^2c_{0201}c_{0120}^2
-8c_{1002}^2c_{0210}c_{0120}c_{0111}
-12c_{1011}c_{1002}c_{0201}c_{0120}c_{0111} 
+4c_{1011}c_{1002}c_{0210}c_{0111}^2
+c_{1011}^2c_{0201}c_{0111}^2 \\
+4c_{1020}c_{1002}c_{0201}c_{0111}^2+4c_{1101}c_{1002}c_{0120}c_{0111}^2-c_{1101}c_{1011}c_{0111}^3-2c_{1110}c_{1002}c_{0111}^3+c_{2001}c_{0111}^4 \\
+8c_{1011}c_{1002}c_{0210}c_{0120}c_{0102} +4c_{1011}^2c_{0201}c_{0120}c_{0102}-16c_{1101}c_{1002}c_{0120}^2c_{0102} -4c_{1011}^2c_{0210}c_{0111}c_{0102} \\ -8c_{1020}c_{1002}c_{0210}c_{0111}c_{0102}-4c_{1020}c_{1011}c_{0201}c_{0111}c_{0102} 
+4c_{1101}c_{1011}c_{0120}c_{0111}c_{0102}+8c_{1110}c_{1002}c_{0120}c_{0111}c_{0102} \\
+2c_{1110}c_{1011}c_{0111}^2c_{0102} -8c_{2001}c_{0120}c_{0111}^2c_{0102}+8c_{1020}c_{1011}c_{0210}c_{0102}^2 
-8c_{1110}c_{1011}c_{0120}c_{0102}^2+16c_{2001}c_{0120}^2c_{0102}^2 \\ +16c_{1002}^2c_{0210}^2c_{0021}+
8c_{1011}c_{1002}c_{0210}c_{0201}c_{0021}-4c_{1011}^2c_{0201}^2c_{0021}-16c_{1020}c_{1002}c_{0201}^2c_{0021} 
+8c_{1101}c_{1002}c_{0201}c_{0120}c_{0021} \\ -12c_{1101}c_{1002}c_{0210}c_{0111}c_{0021} +4c_{1101}c_{1011}c_{0201}c_{0111}c_{0021}+8c_{1110}c_{1002}c_{0201}c_{0111}c_{0021}+c_{1101}^2c_{0111}^2c_{0021} \\
+4c_{1200}c_{1002}c_{0111}^2c_{0021}-8c_{2001}c_{0201}c_{0111}^2c_{0021}-4c_{1101}c_{1011}c_{0210}c_{0102}c_{0021}-16c_{1110}c_{1002}c_{0210}c_{0102}c_{0021} \\ +8c_{1101}c_{1020}c_{0201}c_{0102}c_{0021}
+16c_{1200}c_{1002}c_{0120}c_{0102}c_{0021}-16c_{2001}c_{0201}c_{0120}c_{0102}c_{0021} \\ -4c_{1110}c_{1101}c_{0111}c_{0102}c_{0021}-4c_{1200}c_{1011}c_{0111}c_{0102}c_{0021} 
+24c_{2001}c_{0210}c_{0111}c_{0102}c_{0021} \\ +4c_{1110}^2c_{0102}^2c_{0021}-16c_{1200}c_{1020}c_{0102}^2c_{0021}-4c_{1101}^2c_{0201}c_{0021}^2-16c_{1200}c_{1002}c_{0201}c_{0021}^2+16c_{2001}c_{0201}^2c_{0021}^2 \\
+8c_{1200}c_{1101}c_{0102}c_{0021}^2-16c_{1011}c_{1002}c_{0210}^2c_{0012}+16c_{1020}c_{1002}c_{0210}c_{0201}c_{0012}+8c_{1020}c_{1011}c_{0201}^2c_{0012} \\ +8c_{1101}c_{1002}c_{0210}c_{0120}c_{0012}
-4c_{1101}c_{1011}c_{0201}c_{0120}c_{0012}-16c_{1110}c_{1002}c_{0201}c_{0120}c_{0012}+4c_{1101}c_{1011}c_{0210}c_{0111}c_{0012} \\ +8c_{1110}c_{1002}c_{0210}c_{0111}c_{0012}
-4c_{1101}c_{1020}c_{0201}c_{0111}c_{0012}-4c_{1110}c_{1011}c_{0201}c_{0111}c_{0012}-4c_{1101}^2c_{0120}c_{0111}c_{0012} \\ -8c_{1200}c_{1002}c_{0120}c_{0111}c_{0012} 
+24c_{2001}c_{0201}c_{0120}c_{0111}c_{0012}+2c_{1110}c_{1101}c_{0111}^2c_{0012}-8c_{2001}c_{0210}c_{0111}^2c_{0012} \\ -8c_{1101}c_{1020}c_{0210}c_{0102}c_{0012} 
+8c_{1110}c_{1011}c_{0210}c_{0102}c_{0012}+8c_{1110}c_{1101}c_{0120}c_{0102}c_{0012}-8c_{1200}c_{1011}c_{0120}c_{0102}c_{0012} \\ -16c_{2001}c_{0210}c_{0120}c_{0102}c_{0012}
-4c_{1110}^2c_{0111}c_{0102}c_{0012}+16c_{1200}c_{1020}c_{0111}c_{0102}c_{0012}+4c_{1101}^2c_{0210}c_{0021}c_{0012} \\ +8c_{1200}c_{1011}c_{0201}c_{0021}c_{0012} 
-16c_{2001}c_{0210}c_{0201}c_{0021}c_{0012}-4c_{1200}c_{1101}c_{0111}c_{0021}c_{0012} \\ -8c_{1110}c_{1101}c_{0210}c_{0012}^2+16c_{2001}c_{0210}^2c_{0012}^2+4c_{1110}^2c_{0201}c_{0012}^2 
-16c_{1200}c_{1020}c_{0201}c_{0012}^2+8c_{1200}c_{1101}c_{0120}c_{0012}^2. \end{matrix} $$ \end{tiny}

\begin{proposition} \label{salmon} The prime ideal of $\sigma_4(V_4)$ is generated by 36 quintics.
\end{proposition}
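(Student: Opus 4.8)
The plan is to show that the ideal $I \subseteq \C[c_{3000},\dots,c_{0003}]$ generated by the 36 quintics is itself a prime ideal of codimension four, and then to identify it with the prime ideal $P := I(\sigma_4(V_4))$. By construction each of the 36 quintics vanishes on $\sigma_4(V_4)$: it is the restriction to the linear subspace $L_4$ of a quintic vanishing on $\sigma_4(S_4)$, and $\sigma_4(V_4) \subseteq \sigma_4(S_4) \cap L_4$ since a limit of sums of four symmetric rank one tensors is in particular a symmetric limit of sums of four rank one tensors. Hence $I \subseteq P$. Once we know in addition that $I$ is prime with $\operatorname{codim} I = \operatorname{codim} P$, the containment $V(P) \subseteq V(I)$ of irreducible varieties of equal dimension forces $V(I) = V(P)$, and since $I$ is radical this gives $I = P$; as $I$ is generated by the 36 quintics, so is $P$.

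First I would pin down $\operatorname{codim} P$. The cubic Veronese $\nu_3(\P^3)$ is not defective by the Alexander--Hirschowitz Theorem (the only defective cubic Veronese is $\nu_3(\P^4)$ at $r = 7$), so $\sigma_4(V_4)$ has the expected dimension $\min\{4\cdot 4 - 1,\,19\} = 15$, i.e. codimension four in $\P^{19} = \P(S^3\C^4)$. Next I would verify by computer algebra that $\operatorname{codim} I = 4$ as well: compute a Gr\"obner basis of $I$ and read off the Krull dimension of $\C[\mathbf c]/I$. At that point $\sigma_4(V_4)$ is already an irreducible component of $V(I)$ of the correct dimension, and what remains is to exclude extra components and non-reducedness. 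I see two ways to finish. The lighter one: take a general point $p = \ell_1^3 + \ell_2^3 + \ell_3^3 + \ell_4^3$ with random linear forms $\ell_i$ and check that the Jacobian of the 36 quintics has rank exactly $4$ at $p$; by Terracini's Lemma the affine tangent space to $\widehat{\sigma_4(V_4)}$ at $p$ has dimension $16$, so $V(I)$ is smooth of codimension four at $p$, hence generically reduced along $\sigma_4(V_4)$, and then a comparison $\deg V(I) = \deg \sigma_4(V_4)$ (both computed in Macaulay2, the latter from the parametrization) rules out further components and multiplicity, after which checking that $I$ equals its saturation at the irrelevant ideal gives $I = P$. The more direct one: run a primality test on $I$ outright, making it tractable by exploiting $GL_4$-equivariance (the 36 quintics span an irreducible $GL_4$-submodule of the degree-five part, cut out from the $1728$-dimensional non-symmetric module) or by a random linear change of coordinates that eliminates a few variables.

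The main obstacle is exactly this last point: certifying primality (equivalently, radicality together with irreducibility, plus saturation) of a $36$-generator, degree-five ideal in $20$ variables is the computationally heavy step, and the lighter Jacobian-plus-degree route trades that cost for careful bookkeeping of multiplicities and of the saturation. By contrast the dimension count and the containment $I \subseteq P$ are routine, so the entire weight of the proposition rests on this passage from a set-theoretic to an ideal-theoretic statement.
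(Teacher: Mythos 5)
Your overall strategy---show the ideal $I$ of the $36$ quintics has codimension four, then promote the set-theoretic containment $\sigma_4(V_4)\subseteq V(I)$ to an equality of prime ideals---is the same as the paper's, and your dimension count via Alexander--Hirschowitz and the verification $\operatorname{codim} I = 4$ by Gr\"obner basis are exactly what the paper does. The divergence is in the decisive step, and your ``lighter'' route has a genuine gap there. A Jacobian rank check at a general point of $\sigma_4(V_4)$ shows $I$ is generically reduced along that component, and a degree comparison can rule out extra \emph{top-dimensional} components and multiplicity along the known one. But degree is blind to lower-dimensional and embedded primary components, and saturating only at the irrelevant ideal $\mathfrak m$ removes only the component supported at the origin. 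Nothing in the chain Jacobian $+$ degree $+$ $\mathfrak m$-saturation excludes, say, a codimension-five associated prime, so you cannot conclude $I = P$. Your ``heavier'' route (direct primality test) is logically sound but, as you note, is exactly the computation you were trying to avoid.

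The paper's way around this is the observation you would want to add: compute the minimal free resolution of $R/I$ in Macaulay2 and see that it is the self-dual complex $R^1\leftarrow R^{36}\leftarrow R^{70}\leftarrow R^{36}\leftarrow R^1\leftarrow 0$, so $R/I$ is Gorenstein, hence Cohen--Macaulay, hence unmixed with no embedded primes. Unmixedness is precisely what makes a degree comparison decisive: every associated prime has codimension four, so once the top-dimensional part is a single irreducible component of degree $105$ (established numerically with Bertini, which also replaces your Jacobian/Terracini step by directly certifying irreducibility rather than just smoothness at a point) and $\deg I\le 105$, the ideal is reduced along its unique minimal prime, and $R_0 + S_1$ gives that $R/I$ is reduced, hence $I$ is prime. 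So the missing idea in your sketch is to extract unmixedness from the Betti table rather than trying to discharge the lower-dimensional components via degree and saturation alone.
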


\begin{proof}
The 36 quintics are obtained by restricting the degree five salmon equations to symmetric tensors. Using symbolic computations
 in Macaulay2, they are shown to generate an ideal of degree at most 105 and codimension 4. Their ideal is Gorenstein, with symmetric minimal free resolution
$$ R^1 \leftarrow R^{36} \leftarrow R^{70} \leftarrow R^{36} \leftarrow R^1 \leftarrow 0 ,$$
where $R=\C[x_1,x_2,x_3,x_4]$.
Using the numerical methods of Bertini, 
the highest dimensional component of the variety defined by the 36 quintics is shown to be irreducible, and to have degree 105. The Gorenstein property means the unmixedness theorem applies: there cannot be lower-dimensional components. The zero set of the 36 quintics contains the codimension four set $\sigma_4(V_4)$ of symmetric border rank four tensors, and hence since the codimensions agree, and the former set is irreducible, they are equal as sets. Furthermore, the ideal generated by the 36 quintics is prime, hence they generate the ideal of $\sigma_4(V_4)$.
\end{proof}

\begin{proposition}
The 36 quintics defining $\sigma_4(V_4)$ are the irreducible module $S_{5,4,4,2}(\C^4)$. 
\end{proposition}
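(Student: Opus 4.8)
The plan is to use that $\sigma_4(V_4)$ is invariant under the $GL_4(\C)$-action on $\C^4$, so its homogeneous ideal $I$ is a $GL_4$-submodule of the coordinate ring $\C[S^3\C^4]=\bigoplus_d S^d\big((S^3\C^4)^*\big)$. By Proposition~\ref{salmon}, $I$ is generated in degree five and its degree-five piece $I_5$ is exactly the span of the $36$ quintics; hence $I_5$ is a $36$-dimensional rational representation of $GL_4$ inside the plethysm $S^5\big((S^3\C^4)^*\big)$, and identifying the $36$ quintics as a module is the same as identifying the isomorphism type of $I_5$. As a sanity check, the Weyl dimension formula (or the hook-content formula) gives $\dim S_{5,4,4,2}(\C^4)=36$, and $|(5,4,4,2)|=15$ matches the weight in $x_1,\dots,x_4$ of a quintic in the coefficients of a cubic, so this module is at least a plausible candidate.

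The core step is to read off the extremal weight of $I_5$. The maximal torus of $GL_4$ acts on the coordinate function $c_{ijkl}$ by the character $(i,j,k,l)$, so every monomial in the $c_{ijkl}$ is a torus weight vector whose weight is the sum of the weights of its factors. Inspecting the displayed quintic, all of its monomials carry the same weight $(2,4,4,5)$ --- for instance $c_{1002}^2 c_{0201} c_{0120}^2$ and $c_{2001} c_{0111}^4$ --- so this quintic is a torus weight vector whose weight is a permutation of the dominant weight $(5,4,4,2)$, i.e. an extremal weight of $S_{5,4,4,2}(\C^4)$. Applying the simple raising (resp.\ lowering) operators of $\mathfrak{gl}_4$, which act on $\C[S^3\C^4]$ as explicit first-order differential operators in the $c_{ijkl}$, and checking that they annihilate this quintic, shows it is a genuine highest (resp.\ lowest) weight vector; therefore $I_5$ contains a copy of $S_{5,4,4,2}(\C^4)$, and since $\dim S_{5,4,4,2}(\C^4)=36=\dim I_5$ the two coincide.

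The main obstacle is bookkeeping rather than geometry: keeping the conventions straight (whether one works with $\C^4$ or its dual, and the sign of the torus action on the $c_{ijkl}$ versus on $x_1,\dots,x_4$), and verifying that the chosen quintic is actually an extremal weight vector and not merely a weight vector of the correct weight --- a weight vector of weight $(2,4,4,5)$ could a priori sit inside a strictly larger irreducible constituent of $S^5\big((S^3\C^4)^*\big)$. Two ways to avoid the root-operator check: (i) compute the full $GL_4$-character of the $36$-dimensional representation $I_5$ directly with a computer algebra system (for instance the \texttt{SchurRings} package in Macaulay2, already used elsewhere in the paper) and observe it is the single irreducible $S_{5,4,4,2}(\C^4)$; or (ii) decompose the plethysm $S^5\big((S^3\C^4)^*\big)$, or restrict to the diagonal $GL_4$ the known $1728$-dimensional degree-five module cutting out $\sigma_4(S_4)$, and check that $S_{5,4,4,2}(\C^4)$ occurs there with multiplicity one, which together with $\dim I_5=36$ forces $I_5$ to be exactly that irreducible.
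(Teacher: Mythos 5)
Your proposal matches the paper's argument: the paper also identifies $I_5$ as a $36$-dimensional $GL_4$-submodule of the plethysm $S^5(S^3\C^4)$, computes the plethysm decomposition by computer, and then concludes via ``a highest weight vector analysis,'' or alternatively by observing that $S_{5,4,4,2}$ is the only sum of constituents of the plethysm with total dimension $36$ --- your core step simply fills in the weight-reading and raising-operator verification that the paper leaves implicit. One small caution about your alternative (ii): multiplicity one of $S_{5,4,4,2}$ in the plethysm together with $\dim I_5=36$ does not by itself pin down $I_5$ --- you would still need to know either that $S_{5,4,4,2}\subseteq I_5$ (e.g.\ via your weight-vector observation) or, as the paper does, that no other combination of constituents has total dimension $36$.
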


\begin{proof}
Proposition~\ref{salmon} shows that $\sigma_4(V_4)$ is generated by 36 quintics. Since $\sigma_4(V_4)$ is invariant under $GL_4$ action, the quintics are a $GL_4$ module in the 42504-dimensional space of quintic polynomials in the coefficients of cubic surfaces, $S^5 (S^3 \C^4)$. The $GL_4$ modules in $S^5 (S^3 \C^4)$ are a subset of those from $(\C^4)^{\otimes 15}$. The irreducible modules of the latter are indexed by Young diagrams with 15 boxes and no more than four rows~\cite{L}.
We compute in SAGE which $GL_4$-modules from 
$(\C^4)^{\otimes 15}$ occur in the decomposition of $S^5 (S^3 \C^4)$, by evaluating
\begin{verbatim}
s = SymmetricFunctions(QQ).schur(); s[5].plethysm(s[3])
\end{verbatim}
and then selecting modules whose diagrams have at most four parts. We obtain
$$ \begin{matrix}
S_{5, 4, 4, 2}  \oplus S_{6, 4, 4, 1}  \oplus S_{6, 5, 2, 2}  \oplus S_{6, 6, 3}  \oplus S_{7, 4, 2, 2}  \oplus
S_{7, 4, 3, 1}  \oplus S_{7, 4, 4}  \oplus S_{7, 5, 2, 1}  \\ \oplus S_{7, 6, 2}  \oplus S_{8, 3, 2, 2}  \oplus 
S_{8, 4, 2, 1}  \oplus S_{8, 4, 3}  \oplus S_{8, 5, 2}  \oplus S_{8, 6, 1}  \oplus S_{9, 2, 2, 2}  \oplus 
2 S_{9, 4, 2}  \\ \oplus S_{9, 6}  \oplus S_{10, 3, 2}  \oplus S_{10, 4, 1}  \oplus S_{10, 5}  \oplus S_{11, 2, 2}  \oplus
S_{11, 4}  \oplus S_{12, 3}  \oplus S_{13, 2}  \oplus S_{15} .
\end{matrix} $$
The numbers labeling each module are the length of the rows of the Young diagram. 
A highest weight vector analysis 
shows that the quintics are the 36-dimensional module $S_{5,4,4,2} \C^4$. Alternatively, this is the only combination of irreducible modules of dimension 36. \end{proof}

\subsection{Proof of border rank results}

\begin{proposition} \label{alln}
If the border rank and symmetric border rank agree for $r \times r \times r$ tensors of border rank $r$, then they agree for $n \times n \times n$ tensors of border rank $r$, for all $n \geq r$.
\end{proposition}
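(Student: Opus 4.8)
The plan is to reduce an arbitrary symmetric tensor $T\in S^3(\C^n)$ of border rank $r$ to a symmetric tensor in $S^3(\C^r)$ of border rank $r$, invoke the hypothesis there, and transport the conclusion back to $S^3(\C^n)$, checking throughout that neither the border rank nor the symmetric border rank is disturbed. The case $n=r$ is the hypothesis itself, so assume $n>r$; write $\underline{R}$ and $\underline{R}_{\mathrm{sym}}$ for the border rank and the symmetric border rank.

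The first move is concision. Since the flattening rank is at most the rank and the set of tensors of flattening rank at most $r$ is closed, every tensor of $\sigma_r(S_n)$ has flattening rank at most $r$; hence $T$ has flattening rank $s\le r$. By the standard essential-variables fact --- exactly as applied to cones over cubic curves earlier in this section --- a symmetric tensor of flattening rank $s$ becomes, after a symmetric $GL_n$ change of basis, a tensor supported on its top-left $s\times s\times s$ block, and a symmetric change of basis carries symmetric (respectively arbitrary) rank-one tensors to rank-one tensors of the same kind, hence fixes both $\underline{R}$ and $\underline{R}_{\mathrm{sym}}$. Since $s\le r\le n$, I may therefore assume $T=\iota^{\otimes 3}(T_0)$ for some $T_0\in S^3(\C^r)$, where $\iota\colon\C^r\hookrightarrow\C^n$ is the inclusion of the first $r$ coordinates.

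The heart of the argument is that coordinate inclusion and projection change no border rank. Let $\pi\colon\C^n\to\C^r$ be the projection onto the first $r$ coordinates, so $\pi\circ\iota=\mathrm{id}_{\C^r}$ and thus $\pi^{\otimes 3}(T)=T_0$. Both $\iota^{\otimes 3}$ and $\pi^{\otimes 3}$ are linear and send each (symmetric) rank-one tensor to a (symmetric) rank-one tensor or to zero; hence they take tensors of rank at most $k$ to tensors of rank at most $k$, and so map the $k$th secant variety of the smaller Segre (respectively Veronese) variety into that of the larger one, for every $k$. Applying $\iota^{\otimes 3}$ gives $r=\underline{R}(T)=\underline{R}(\iota^{\otimes 3}T_0)\le\underline{R}(T_0)$, and applying $\pi^{\otimes 3}$ gives $\underline{R}(T_0)=\underline{R}(\pi^{\otimes 3}T)\le\underline{R}(T)=r$, so $\underline{R}(T_0)=r$; the identical pair of inequalities for the symmetric border rank gives $\underline{R}_{\mathrm{sym}}(T)=\underline{R}_{\mathrm{sym}}(T_0)$. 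Now $T_0\in S^3(\C^r)$ is an $r\times r\times r$ symmetric tensor of border rank $r$, so the hypothesis yields $\underline{R}_{\mathrm{sym}}(T_0)=r$, whence $\underline{R}_{\mathrm{sym}}(T)=r$, as required.

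I do not anticipate a genuine obstacle: the content is entirely in the invariance claims, the one worth spelling out being that $\pi^{\otimes 3}$ cannot raise border rank. Concretely, if $T_\varepsilon\to T$ with each $T_\varepsilon$ of rank at most $r$, then $\pi^{\otimes 3}(T_\varepsilon)\to\pi^{\otimes 3}(T)=T_0$, with each $\pi^{\otimes 3}(T_\varepsilon)$ of rank at most $r$ and supported on the top-left $r\times r\times r$ block, which displays $T_0$ as a tensor of $\C^r\otimes\C^r\otimes\C^r$ of border rank at most $r$; the symmetric version is the same argument with $\pi$ preserving symmetry, and the companion statement for $\iota^{\otimes 3}$ is immediate. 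This is what makes the reduction to the $r\times r\times r$ format lossless, so that the hypothesis applies on the nose.
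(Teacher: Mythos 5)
Your proof is correct, but it takes a genuinely different route from the paper's. The paper argues at the level of defining equations via the \emph{inheritance} technique of Landsberg: it writes down the known generating sets for $\sigma_r(S_n)$ and $\sigma_r(V_n)$ (the $(r+1)\times(r+1)$ flattening minors together with copies of the equations for $\sigma_r(S_r)$, resp. $\sigma_r(V_r)$, pulled back along choices of $r$-dimensional subspaces), then observes that restricting the non-symmetric generators to the symmetric subspace $L_n$ reproduces the symmetric generators once the hypothesis is invoked to identify $\sigma_r(S_r)\cap L_r$ with $\sigma_r(V_r)$. Your argument instead works directly with tensors: you use concision to put $T$ in the top-left $s\times s\times s$ block, embed into $\C^r$, and then use that the coordinate inclusion $\iota^{\otimes 3}$ and projection $\pi^{\otimes 3}$ both map $k$th secant varieties into $k$th secant varieties (for Segre and Veronese alike), with $\pi\circ\iota=\mathrm{id}$ forcing equality of border ranks. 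Both approaches are sound. The paper's has the advantage of being phrased in the standard secant-variety inheritance framework and yields the stronger set-theoretic identity $\sigma_r(V_n)=\sigma_r(S_n)\cap L_n$ directly; yours is more elementary and self-contained, requiring no knowledge of which polynomials cut out $\sigma_r(S_r)$, only the closedness of flattening rank and the functoriality of secant varieties under linear inclusions and projections. One small remark: your use of the hypothesis is clean because you first verify that $T_0$ really does have border rank exactly $r$ (via $\pi\circ\iota=\mathrm{id}$), so the hypothesis applies as stated.
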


\begin{proof}
The containment $ \sigma_r(V_n) \subseteq \sigma_r(S_n) \cap L_n $ always holds. It remains to prove the opposite containment.
We use the technique of {\em inheritance} (see~\cite[Example 5.7.3.8 and~\S7.4]{L}). 
Equations for $\sigma_r(S_n)$ consist of $(r+1) \times (r+1)$ minors of flattenings, and copies of equations for $\sigma_r(S_r)$ obtained by choosing a basis of size $r$ in each factor $\C^n$. The $(r+1) \times (r+1)$ minors intersect with $L_n$ to give the minors of the symmetric flattenings, while the equations for $\sigma_r(S_r)$ intersect with $L_n$ to give $\sigma_r(V_r)$ by the hypothesis of the proposition. We can then compare with the equations for $\sigma_r(V_n)$ given in~\cite[Corollary 7.4.2.3]{L}. The equations are the $(r+1) \times (r+1)$ minors of the symmetric flattenings, as well as copies of equations for $\sigma_r(V_r)$ given by choosing the same basis of size $r$ in each factor $\C^n$. All such choices of basis are covered by the non-symmetric choices in the equations for $\sigma_r(S_n)$, hence this proves the reverse containment.
\end{proof}

\begin{proof}[Proof of Theorem~\ref{border}]
 By the Alexander-Hirschowitz theorem~\cite{AH}, the secant variety $\sigma_5(V_4)$ fills the space of symmetric $4 \times 4 \times 4$ tensors.
As in Lemma~\ref{curves}, we compare the equations defining the secant variety $\sigma_r(V_4)$ with the symmetric restriction of the equations defining the non-symmetric secant $\sigma_r(S_4)$, for $1 \leq r \leq 5$.
The result for $r = 1,2,3$ follows from Lemma~\ref{curves} combined with Proposition~\ref{alln}. When $r = 4$ the result follows from Proposition~\ref{salmon}. 
Finally, all tensors outside of $\sigma_4(V_4)$ have symmetric complex border rank five. Proposition~\ref{salmon} implies that they must also have non-symmetric complex border rank five.
\end{proof}

\begin{proof}[Proof of Corollary~\ref{corbor}]

Theorem~\ref{border} combined with Proposition~\ref{alln} shows that all tensors of border rank $r$ also have symmetric border rank $r$, for $1 \leq r \leq 4$. Consider a tensor of symmetric border rank five. Its border rank cannot be four by Theorem~\ref{border}. Hence the border rank is also five.
\end{proof}

\section{Real ranks of cubic surfaces}\label{realranks}

We first prove the following, by combining results from the literature.

\begin{proposition}\label{generic}
Real rank and real symmetric rank coincide for generic real cubic surfaces.
\end{proposition}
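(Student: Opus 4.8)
The plan is to reduce the statement to the remark following Theorem~\ref{friedland}, applied over $\R$. For a cubic surface $f$ chosen outside a lower-dimensional subset of the $19$-dimensional family, the flattening rank equals $4$: the $4\times 16$ flattening matrix has maximal rank on a Zariski-open set of cubics (it does so for $x_1^3+x_2^3+x_3^3+x_4^3$), and the vanishing of its $4\times 4$ minors cuts out only a measure-zero set of real points. It therefore suffices to establish that the real symmetric rank of a generic real cubic surface is at most $6=4+2=(\text{flattening rank})+2$; the remark following Theorem~\ref{friedland}, valid over any field with at least three elements and hence over $\R$, then forces the real rank and real symmetric rank to coincide.

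The input I would take from the literature is exactly this bound. By Sylvester's Pentahedral Theorem~\ref{sylv}, a generic complex cubic surface decomposes uniquely as a sum of five cubes; when $f$ is real, the five linear forms in this unique decomposition form a set stable under complex conjugation, so they realize one of the three patterns ``five real forms'', ``three real forms and one conjugate pair'', or ``one real form and two conjugate pairs'', each of which occurs on a full-dimensional region. In the first pattern the real symmetric rank is $5$. In the second and third patterns, uniqueness of the complex pentahedron rules out any real five-term decomposition, so the real symmetric rank is at least $6$; the known analysis of real Sylvester decompositions (equivalently, the classification of typical real Waring ranks) of quaternary cubics supplies the matching upper bound $6$. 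Hence the real symmetric rank of a generic real cubic surface is $5$ or $6$, in particular at most $(\text{flattening rank})+2$, and the conclusion follows from the first paragraph.

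The step I expect to be the main obstacle is this last upper bound: ruling out a full-dimensional region of real symmetric rank $7$, that is, showing that a generic real cubic surface in the ``one real form and two conjugate pairs'' locus still admits a real decomposition into six cubes rather than the seven terms produced by the Sylvester pentahedron, where each conjugate pair $l^3+\bar l^{\,3}=2\operatorname{Re}(l^3)$ contributes a binary cubic of real Waring rank three. If such a seventh region did occur, the Friedland remark would only pin the real rank to $6$ or $7$, and one would further need a genuine lower bound --- for instance the real substitution method of Theorem~\ref{substitution} on a normal form of that region to exclude the value $6$, or a direct computation of the typical real \emph{tensor} ranks of $4\times 4\times 4$ symmetric tensors showing they agree region-by-region with the real Waring ranks.
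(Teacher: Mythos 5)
Your proposal matches the paper's proof: both combine Sylvester's pentahedron, the conjugate-pair structure of the five linear forms in the real case, the upper bound of six on the real symmetric (Waring) rank in the two-conjugate-pair case (the paper cites Bernardi--Blekherman--Ottaviani~\cite{BBO} for exactly the step you flag as the main obstacle), and Theorem~\ref{friedland} applied over $\R$ with flattening rank four. The only cosmetic difference is that you deduce the conjugate-pair structure from uniqueness of the complex pentahedron, while the paper derives it from the fact that the singular points of the real Hessian surface come in conjugate pairs; the two routes are equivalent.
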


\begin{proof}
A generic real cubic surface has complex rank five, 
$ f = l_1^3 + l_2^3 + l_3^3 + l_4^3 + l_5^3 $.
The linear forms $l_i$ define five planes in $\P^3$ that comprise Sylvester's pentahedron. The triple intersections of the planes are the singular points of the Hessian surface. Since $f$ has real coefficients, so does its Hessian surface, and the singular points of the Hessian occur in complex conjugate pairs. Hence the complex linear forms appearing in the decomposition of~$f$ also occur in complex conjugate pairs. There can be zero, one, or two complex conjugate pairs in the decomposition. A cubic $l^3 + \overline{l^3}$, where $l$ is complex and $\overline{l}$ its complex conjugate, has real symmetric rank three. Hence in the first two cases the real symmetric rank is bounded above by six. In~\cite{BBO}, the authors show that the symmetric rank of the third case is also at most six, and therefore that a generic real cubic surface has real symmetric rank five or six. Generic cubic surfaces have flattening rank four, hence we can apply Theorem~\ref{friedland}, which also holds over the field $\R$, to conclude that the real symmetric and non-symmetric ranks coincide up to rank five, and hence up to symmetric rank six.
\end{proof}

We consider special cubic surfaces in more detail, starting with cones over cubic curves.

\begin{proposition}
Real rank and real symmetric rank coincide for cones over cubic curves.
\end{proposition}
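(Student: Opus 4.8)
The plan is to reduce, exactly as in Section~\ref{cpxrnk}, to the case of a plane cubic curve and then to read off the conclusion from the real rank classification of ternary cubics together with the real version of Theorem~\ref{friedland}. A cone over a cubic curve has flattening rank at most three, and its flattening is a real $4 \times 16$ matrix, so its kernel contains a nonzero real vector. Applying the corresponding real symmetric change of basis in $GL_4(\R)$ — an operation that preserves both the real rank and the real symmetric rank — produces a tensor whose nonzero entries all lie in its top-left $3 \times 3 \times 3$ block. Deleting the fourth coordinate of every vector appearing in a real rank-one decomposition, symmetric or not, shows that the real rank of the cone equals the real rank of the associated element $f \in S^3(\R^3)$, and the same holds for symmetric rank. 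So it suffices to prove that the real rank and real symmetric rank of an arbitrary plane cubic curve $f$ agree.

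Next I would bound the real symmetric rank of $f$ in terms of its flattening rank $m \in \{1,2,3\}$: I claim it never exceeds $m+2$. If $m = 3$ this is precisely the statement that a real ternary cubic has real symmetric rank at most five, which is the content of the classification of real ternary cubics in~\cite{B}. If $m \leq 2$, then the kernel of the real flattening again contains a real vector, so a real change of coordinates rewrites $f$ in at most two variables; a binary cubic is classically a sum of at most three real cubes, so its real symmetric rank is at most $3 \leq m+2$ (and equals one when $m = 1$). Once the bound \textup{(real symmetric rank)} $\leq m+2$ is in hand, the consequence of Theorem~\ref{friedland} recorded immediately after its statement — which, like the theorem itself, is valid over $\R$ — forces the real rank and real symmetric rank of $f$ to coincide: if they did not, the real rank would be strictly smaller, hence at most $m+1$, and then Theorem~\ref{friedland} would equate them anyway. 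This gives the result for plane cubic curves and therefore, by the first paragraph, for all cones over cubic curves.

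The only genuinely non-formal ingredient is the case $m = 3$, which rests on the sharp bound that a real ternary cubic has real symmetric rank at most five; this is exactly where the real rank geometry enters, and it is supplied by the classification in~\cite{B}. Everything else is a routine transcription of the complex argument of Section~\ref{cpxrnk} to the real setting, the only point to check being that each change of basis used can be chosen with real entries, which holds because the relevant flattening kernels are real subspaces.
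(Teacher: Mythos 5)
Your argument is correct and follows the paper's proof essentially verbatim: reduce via a real change of basis to a plane cubic curve, invoke the real-field version of the corollary to Theorem~\ref{friedland}, and verify that every real ternary cubic has real symmetric rank at most its flattening rank plus two using the classification in~\cite{B}. The only difference is that you spell out the low-flattening-rank cases via the binary-cubic reduction rather than reading them directly from Table~1 of~\cite{B}, which is a harmless elaboration of the same check.
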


\begin{proof}
Such surfaces have flattening rank at most three. We apply a general linear group transformation to obtain a real symmetric tensor with non-zero entries only in its top-left $3 \times 3 \times 3$ block, and we study the cone as a cubic curve. Using Theorem~\ref{friedland}, equality of real rank and real symmetric rank holds whenever the real symmetric rank is at most two more than the flattening rank. To conclude the proof, we check that all real cubic curves have this property~\cite[Table 1]{B}. 
\end{proof}

It remains to consider cubic surfaces of maximal flattening rank and, by Theorem~\ref{friedland}, those of real symmetric rank at least seven.
Among such non-generic cubic surfaces, one family are the reducible surfaces, for which ranges of real ranks are given in~\cite{CGV}. We revisit the reducible real cubic surface from Proposition~\ref{rank7} from the perspective of real rank.

\begin{proposition} The cubic surface $f = x_1 ( x_1 x_2 + x_3^2 + x_4^2)$ has real non-symmetric rank and real symmetric rank seven.
\end{proposition}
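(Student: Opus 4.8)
The plan is to establish both the lower bound and the upper bound of seven for each notion of real rank, exploiting what we already know over $\C$. From Proposition~\ref{rank7} we have that the complex non-symmetric rank of $f$ is seven, and the complex symmetric rank is seven as well (from~\cite{S}). Since complex rank is a lower bound for real rank, it follows immediately that the real non-symmetric rank and the real symmetric rank of $f$ are both \emph{at least} seven. So the entire content is the matching upper bound: I must produce an explicit decomposition of $f$ over $\R$ into seven rank-one terms, and it suffices to do this for the symmetric rank, since symmetric decompositions are in particular non-symmetric ones (and the non-symmetric real rank cannot exceed the symmetric real rank, which cannot be less than the complex symmetric rank of seven).

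The main step, then, is to exhibit a real Waring decomposition $f = x_1(x_1x_2 + x_3^2 + x_4^2) = \sum_{i=1}^{7} \lambda_i\, l_i^3$ with $\lambda_i \in \R$ and $l_i \in \R[x_1,x_2,x_3,x_4]$. I would build this piecewise from the three summands. The binary form $x_1 x_2 = \tfrac14\big((x_1+x_2)^2 - (x_1-x_2)^2\big)$, multiplied by $x_1$, is a binary cubic in $x_1,x_2$ of maximal (complex) rank three; over $\R$ the cubic $x_1^2 x_2$ has real rank three, realized e.g.\ by $x_1^2 x_2 = \tfrac16\big((x_1+x_2)^3 + (x_1 - x_2)^3 - 2x_2^3\big)$ — three real cubes. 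The summand $x_1 x_3^2$ is, analogously, a binary cubic in $x_1, x_3$ of real rank three; and likewise $x_1 x_4^2$ has real rank three. Naively this gives $3+3+3 = 9$, so the work is to combine the three blocks so that shared linear forms collapse the count down to seven. Concretely, I would write $x_1 x_3^2 + x_1 x_4^2 = x_1(x_3^2 + x_4^2)$ and seek a real decomposition of this ternary cubic in $x_1,x_3,x_4$ using four cubes (its real rank is four, matching the complex rank-four monomial-type behaviour here), and then handle $x_1^2 x_2$ with a decomposition arranged to reuse an $x_1^3$ term or an $(x_1 \pm \text{something})^3$ term already present. The target is a grand total of seven; I would verify the count of distinct linear forms needed across the merged expression, adjusting the coefficients $\lambda_i$ to make the cross terms vanish.

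The main obstacle is the bookkeeping of the merge: ensuring that after combining the decomposition of $x_1(x_3^2+x_4^2)$ with that of $x_1^2 x_2$ no spurious monomials (such as $x_1^3$, $x_1 x_2^2$, $x_1 x_3 x_4$, $x_2^3$) survive, while keeping the total number of linear forms at seven and all coefficients real. A clean way to organize this is to start from a known complex rank-seven symmetric decomposition of $f$ (which exists since the complex symmetric rank is seven) and check whether it is already defined over $\R$, or can be conjugated/averaged into a real one; if the complex decomposition comes in conjugate pairs, replacing each pair $l^3 + \overline{l}^3$ by its real and imaginary parts could threaten to increase the count, so I would instead look directly for a real decomposition respecting the block structure above. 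Once the explicit seven-term real Waring decomposition is written down and expanded to confirm it equals $f$, the proposition follows: real symmetric rank $\le 7$, hence real non-symmetric rank $\le 7$, and both are $\ge 7$ by the complex lower bound, so both equal seven.
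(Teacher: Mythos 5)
Your strategy is exactly the one the paper uses: the lower bound of seven follows from Proposition~\ref{rank7} (complex rank bounds real rank from below), and the matching upper bound requires a real seven-term Waring decomposition, which in turn bounds the real non-symmetric rank. All of that framing is correct.

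The gap is that you never actually produce the decomposition. You describe how you \emph{would} search for one (merge the decompositions of $x_1^2 x_2$ and $x_1(x_3^2+x_4^2)$, reuse shared linear forms, adjust coefficients), name the obstacle yourself (``the bookkeeping of the merge''), and then conclude ``once the explicit seven-term real Waring decomposition is written down \ldots the proposition follows.'' That conditional is the whole content of the upper bound, so the proof is not complete as written. There is also a sign slip in the one identity you do write: $(x_1+x_2)^3 + (x_1-x_2)^3 = 2x_1^3 + 6x_1x_2^2$, so $\tfrac16\bigl((x_1+x_2)^3 + (x_1-x_2)^3 - 2x_2^3\bigr) \ne x_1^2 x_2$; the intended formula is $x_1^2x_2 = \tfrac16\bigl((x_1+x_2)^3 - (x_1-x_2)^3 - 2x_2^3\bigr)$. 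For comparison, the paper closes the gap by exhibiting
$$ f = -x_1^2\Bigl(\tfrac23 x_1 - x_2 + x_3 + x_4\Bigr) + \tfrac13\Bigl((x_1+x_3)^3 - x_3^3 + (x_1+x_4)^3 - x_4^3\Bigr), $$
where the first summand is a binary cubic of the form $l_1^2 l_2$, hence real symmetric rank three, and the parenthesis contributes four more real cubes, for a total of seven. Your plan would likely have landed on something equivalent, but the proof needs that explicit witness to be complete.
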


\begin{proof}
The surface can be written as
$$ f = -x_1^2 \left( \frac23 x_1 - x_2 + x_3 + x_4 \right) + \frac13 \left( (x_1 + x_3)^3 - x_3^3 + (x_1 + x_4)^3 - x_4^3 \right) .$$
The first term has real symmetric rank three. The remaining terms are expanded as a real symmetric rank four decomposition. This gives an upper bound of seven. Equality follows from the lower bound in Proposition~\ref{rank7}.
\end{proof}

We introduce a tool for obtaining lower bounds on the real non-symmetric rank of a tensor. It is the real analogue to the substitution method in Theorem~\ref{substitution}.

\begin{theorem}[The substitution method over $\R$] \label{rsubstitution}
Let $T \in \R^{n_1} \otimes \R^{n_2}  \otimes \R^{n_3}$ be a tensor of real rank $r$. We can write $T$ in terms of its slices as $T = \sum_{i = 1}^{n_1} e_i \otimes M_i$, where $\{ e_i : 1 \leq i \leq {n_1} \}$ are the elementary basis vectors, and the $M_i$ are $n_2 \times n_3$ real matrices. Reordering indices such that $M_{n_1} \neq 0$, there exist real constants $\lambda_1, \ldots, \lambda_{n_1-1}$ such that the following $(n_1-1) \times n_2 \times n_3$ real tensor has real rank at most $r-1$:
$$ \sum_{i=1}^{n_1-1} e_i \otimes (M_i - \lambda_i M_{n_1}) .$$
If the matrix $M_{n_1}$ has rank one, the real tensor above has real rank exactly $r-1$.
\end{theorem}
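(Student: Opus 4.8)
The plan is to reproduce the proof of Theorem~\ref{substitution} line by line, observing that every scalar it manufactures from a \emph{real} decomposition is automatically real. Fix a real rank decomposition $T = \sum_{j=1}^{r} a_j \otimes b_j \otimes c_j$ with $a_j \in \R^{n_1}$, $b_j \in \R^{n_2}$, $c_j \in \R^{n_3}$. Reading off the slices gives $M_i = \sum_{j=1}^{r} (a_j)_i\, b_j \otimes c_j$, so for any real numbers $\lambda_1, \dots, \lambda_{n_1-1}$ one has
$$ \sum_{i=1}^{n_1-1} e_i \otimes (M_i - \lambda_i M_{n_1}) \;=\; \sum_{j=1}^{r} \tilde{a}_j \otimes b_j \otimes c_j, \qquad (\tilde{a}_j)_i := (a_j)_i - \lambda_i (a_j)_{n_1}. $$
Because $M_{n_1} = \sum_j (a_j)_{n_1}\, b_j \otimes c_j$ is nonzero, some $a_j$ has nonzero last coordinate; after reordering assume $(a_1)_{n_1} \neq 0$ and set $\lambda_i := (a_1)_i / (a_1)_{n_1}$, which lies in $\R$. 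Then $\tilde{a}_1 = 0$, the $j = 1$ summand vanishes, and the left-hand tensor above is exhibited as a sum of $r-1$ real rank-one tensors, proving the real rank bound with real multipliers $\lambda_i$.

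For the final clause, write $v := e_{n_1} + \sum_{i=1}^{n_1-1} \lambda_i e_i$ and $T' := \sum_{i=1}^{n_1-1} e_i \otimes (M_i - \lambda_i M_{n_1})$. Expanding and regrouping gives $T = v \otimes M_{n_1} + \widehat{T'}$, where $\widehat{T'}$ denotes the image of $T'$ under the coordinate inclusion $\R^{n_1-1} \hookrightarrow \R^{n_1}$ on the first $n_1-1$ coordinates, which does not increase real rank. If $M_{n_1}$ has rank one then $v \otimes M_{n_1}$ has real rank one, so $r = \mathrm{rank}_{\R}(T) \le 1 + \mathrm{rank}_{\R}(\widehat{T'}) \le 1 + \mathrm{rank}_{\R}(T')$; together with the upper bound $\mathrm{rank}_{\R}(T') \le r-1$ already shown, this forces $\mathrm{rank}_{\R}(T') = r-1$.

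I do not expect a genuine obstacle here: the complex argument never uses algebraic closure, only the ability to divide by the nonzero scalar $(a_1)_{n_1}$, which $\R$ supports, and subadditivity of rank together with invariance under the coordinate embedding hold verbatim for $\mathrm{rank}_{\R}$. If anything, the only point demanding care is to keep track that the decomposition one starts from is a \emph{real} one throughout, so that no complex linear forms are smuggled in when solving for the $\lambda_i$.
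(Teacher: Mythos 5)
Your proof is correct and follows essentially the same approach as the paper: extract the slices from a fixed real rank-$r$ decomposition, choose real $\lambda_i$ to cancel one summand (noting that the needed multipliers are quotients of real scalars), and for the exact-rank clause write $T$ as a padded copy of the truncated tensor plus a rank-one tensor built from $M_{n_1}$. One small point in your favor: the paper sets $\lambda_i = \mu_{ri}$ without normalizing $\mu_{r n_1}$ to $1$, whereas your $\lambda_i = (a_1)_i/(a_1)_{n_1}$ is the choice that actually eliminates the first term, and your explicit identity $T = v \otimes M_{n_1} + \widehat{T'}$ makes precise what the paper's terse closing sentence asserts informally.
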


\begin{proof}
Assume $T$ has real rank $r$, with real rank decomposition $T = T_1 + \cdots + T_r$. We can express each rank one tensor in the decomposition as $T_k = \sum_{i = 1}^{n_1} \mu_{ki} e_i \otimes L_{k}$ where the $\mu_{ki}$ are real scalars and $L_k$ is a rank one real matrix. 
The slices of $T$ can then be expressed as $M_i = \sum_{k = 1}^r \mu_{ki} L_k$. By the assumption that $M_{n_1}$ is non-zero, we can reorder the terms in the decomposition such that $\mu_{r n_1} \neq 0$. Setting $\lambda_i = \mu_{r i}$, the tensor $\sum_{i=1}^{n_1-1} e_i \otimes (M_i - \lambda_i M_{n_1})$ has all slices expressible as a linear combination of $L_1, \ldots, L_{r-1}$, and hence it has real rank at most $r-1$. The last sentence follows from the fact that if $M_{n_1}$ has rank one, subtracting multiples of it can change the real rank by at most one.
\end{proof}

We illustrate Theorem~\ref{rsubstitution} on the reducible cubic surface $f = x_1(x_1^2 - x_2^2 - x_3^2 - x_4^2)$. It has real symmetric rank seven, and complex rank six~\cite{CGV}. Since the real and complex ranks differ, the usual substitution method in Theorem~\ref{substitution} does not give a tight lower bound on the real rank. We use the real substitution method to bound the real rank below by seven, and hence to conclude that the real rank and real symmetric rank agree.

\begin{proposition}
The cubic surface $f = x_1(x_1^2 - x_2^2 - x_3^2 - x_4^2)$ has real rank and real symmetric rank seven.
\end{proposition}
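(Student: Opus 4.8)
The plan is to mirror the structure used for the surface $x_1(x_1x_2+x_3^2+x_4^2)$ in Proposition~\ref{rank7}: first exhibit an explicit real decomposition of length seven, which gives the upper bound, and then apply the real substitution method of Theorem~\ref{rsubstitution} to obtain a matching lower bound. For the upper bound I would split $f = x_1^3 - x_1 x_2^2 - x_1 x_3^2 - x_1 x_4^2$ along the lines of the identity $x_1^3 - x_1 x_j^2 = \tfrac16\bigl((x_1+x_j)^3 + (x_1-x_j)^3\bigr) - \tfrac13 x_1^3 - \ldots$; more efficiently, write each binary cubic $x_1^3 - x_1 x_j^2 = x_1(x_1-x_j)(x_1+x_j)$, which is a real binary cubic with three distinct real roots and hence has real symmetric rank two, i.e. $x_1^3 - x_1 x_j^2 = a_j(x_1+b_jx_j)^3 + c_j(x_1+d_jx_j)^3$ for explicit real scalars. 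Summing over $j=2,3,4$ naively gives rank $6$ but with a shared contribution in the $x_1^3$ direction that over-counts; balancing the $x_1^3$ coefficient carefully, one isolates an $x_1^3$ term plus three rank-one pieces, or alternatively writes $f = -\tfrac23 x_1^3 + \tfrac13\sum_{j=2}^4\bigl((x_1+x_j)^3 - x_j^3\bigr)$ after a sign normalization, exhibiting $f$ as a sum of seven real cubes of linear forms. This gives real symmetric rank, and hence real non-symmetric rank, at most seven.

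For the lower bound I would write $f$ as a symmetric $4\times4\times4$ tensor with slices $M_1,M_2,M_3,M_4$, exactly as in Propositions~\ref{rank7} and~\ref{rank7no2}: the first slice carries the $x_1^2$ and cross terms, and $M_2,M_3,M_4$ are the rank-one (or low-rank) symmetric matrices $e_1e_1^\top$-type blocks coming from $-x_1x_2^2$, $-x_1x_3^2$, $-x_1x_4^2$. Applying Theorem~\ref{rsubstitution} iteratively to the second, third and fourth slices, I need two facts: (i) $M_2,M_3,M_4$ are linearly independent over $\R$, and (ii) no real linear combination $M_1 - \lambda_2 M_2 - \lambda_3 M_3 - \lambda_4 M_4$ has rank $\le 3$, i.e. its determinant never vanishes for real $\lambda_i$. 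Point (i) is immediate from the block structure. Point (ii) is where the real field genuinely matters: the determinant of $M_1 - \sum_j \lambda_j M_j$ is a polynomial in $\lambda_2,\lambda_3,\lambda_4$ whose complex zero locus is nonempty (consistent with the complex rank being six), but one must check it has no \emph{real} zeros, using the negative-definite signature contributed by the $-x_j^2$ terms in $M_1$. Since each substitution step removes exactly one from the rank (the subtracted slices have rank one), one gets $\mathrm{rank}_\R(f) \ge 4 + 3 = 7$, matching the upper bound; equality of real and real symmetric rank then follows since $7 \le \mathrm{rank}_\R \le \mathrm{rank}_\R^{\mathrm{sym}} \le 7$.

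The main obstacle is step (ii): verifying that $\det\bigl(M_1 - \lambda_2 M_2 - \lambda_3 M_3 - \lambda_4 M_4\bigr)$ is nowhere zero on $\R^3$. Over $\C$ this determinant does vanish (that is precisely why the complex substitution method fails to certify rank $7$ here and only gives the weaker complex bound $6$), so the argument must exploit the real structure. Concretely I expect that after writing out the determinant one finds it equals, up to positive scalar, a sum of a strictly positive quantity and squares — for instance the $-x_j^2$ entries force a term like $-(\text{something}^2 + \text{something}^2 + \text{something}^2)$ with a definite sign that cannot be cancelled by the remaining real terms — so the determinant is a nonzero real polynomial of constant sign. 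This is the one place a genuine (if short) computation is unavoidable; everything else is bookkeeping analogous to Proposition~\ref{rank7}.
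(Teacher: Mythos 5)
Your lower bound argument is correct and in fact \emph{simpler} than the paper's. The determinant you anticipate having a definite sign can be computed directly: with the scaled tensor $x_1(x_1^2-3x_2^2-3x_3^2-3x_4^2)$, the slices are $M_1=\operatorname{diag}(1,-1,-1,-1)$ and $M_2,M_3,M_4$ each having $-1$ in the $(1,j),(j,1)$ positions, so
\[
\det\bigl(M_1 - aM_2 - bM_3 - cM_4\bigr) \;=\; \det\begin{bmatrix} 1 & a & b & c \\ a & -1 & 0 & 0 \\ b & 0 & -1 & 0 \\ c & 0 & 0 & -1 \end{bmatrix} \;=\; -\bigl(1 + a^2 + b^2 + c^2\bigr),
\]
which is strictly negative for all real $a,b,c$ (and vanishes over $\C$ exactly when $a^2+b^2+c^2=-1$, consistent with the complex rank being six). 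Three iterated applications of Theorem~\ref{rsubstitution} along the first index reduce to a matrix of this form, which always has rank four, giving real rank $\ge 3+4=7$. The paper instead peels off pairs of slices in all three index directions down to a $2\times 2\times 2$ tensor and uses elimination to show the ideal of its eight entries has no real points; your single-determinant check achieves the same bound more economically, and is an exact real analogue of the argument in Proposition~\ref{rank7}.

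Your upper bound, however, contains two genuine errors. First, the binary cubic $x_1^3 - x_1 x_j^2 = x_1(x_1-x_j)(x_1+x_j)$ has three distinct \emph{real} linear factors and therefore has real rank \emph{three}, not two; over $\R$ the rank-two case corresponds to a negative discriminant (one real and two complex conjugate roots), as the paper notes in its Example 1.1 for $x_3^3-x_3x_4^2$. Second, your proposed identity $f = -\tfrac23 x_1^3 + \tfrac13\sum_{j=2}^4\bigl((x_1+x_j)^3 - x_j^3\bigr)$ does not hold: the right-hand side expands to $\tfrac13 x_1^3 + x_1^2(x_2+x_3+x_4) + x_1(x_2^2+x_3^2+x_4^2)$, which has the wrong sign on the mixed terms and spurious $x_1^2 x_j$ terms. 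A correct seven-term decomposition in the spirit you intend is
\[
f \;=\; 2x_1^3 \;-\; \tfrac16\sum_{j=2}^4\Bigl[(x_1+x_j)^3 + (x_1-x_j)^3\Bigr],
\]
but the paper sidesteps this entirely by citing~\cite{CGV} for the real symmetric rank being seven, which bounds the real non-symmetric rank above by seven.
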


\begin{proof}
The statement about the symmetric rank is in~\cite{CGV}. For the lower bound on the non-symmetric rank, we use Theorem~\ref{rsubstitution}. For computational convenience we scale the cubic, leaving the rank unchanged, to $x_1(x_1^2 - 3 x_2^2 - 3 x_3^2 - 3 x_4^2)$ or, as a tensor,
$$ e_1 \otimes 
\begin{bmatrix}
1 & 0 & 0 & 0 \\ 
0 & -1 & 0 & 0 \\ 
0 & 0 & -1 & 0 \\ 
0 & 0 & 0 & -1 \\ 
\end{bmatrix}
+ e_2 \otimes
\begin{bmatrix}
0 & -1 & 0 & 0 \\ 
-1 & 0 & 0 & 0 \\ 
0 & 0 & 0 & 0 \\ 
0 & 0 & 0 & 0 \\ 
\end{bmatrix}
+ e_3 \otimes
\begin{bmatrix}
0 & 0 & -1 & 0 \\ 
0 & 0 & 0 & 0 \\ 
-1 & 0 & 0 & 0 \\ 
0 & 0 & 0 & 0 \\ 
\end{bmatrix}
+ e_4 \otimes
\begin{bmatrix}
0 & 0 & 0 & -1 \\
0 & 0 & 0 & 0 \\
0 & 0 & 0 & 0 \\
-1 & 0 & 0 & 0 \\
\end{bmatrix} . $$
We subtract off an arbitrary multiple of the slices of the tensor to give a $4 \times 4 \times 2$, $4 \times 2 \times 2$, and finally a $2 \times 2 \times 2$ tensor. We show that there do not exist {\em real} multiples that can be subtracted to give a tensor of zeros. 
If the pairs of slices we subtract are linearly independent, Theorem~\ref{rsubstitution} then implies that the real non-symmetric rank of $f$ is at least $1 + 2 + 2 + 2 = 7$.

Subtracting off three pairs of slices of $f$ in multiples $s_i, t_i, u_i,  v_i, w_i, x_i$, where $i = 1,2$ denotes which of the two slices we subtract from, gives the $2 \times 2 \times 2$ tensor with slices
\begin{footnotesize}
$$ \begin{bmatrix}  s_1 u_1+t_1 v_1+ t_1 x_1-v_1 x_1+1 & s_2 u_1+ t_2 v_1 + t_2 x_1 - w_1 \\
      s_1 u_2 + t_1 v_2 - v_2 x_1 - w_1 & s_2 u_2 + t_2 v_2-1\end{bmatrix}, \qquad
      \begin{bmatrix} 
      t_1 x_2 - v_1 x_2 + s_1 - u_1 & t_2 x_2 + s_2 - w_2 \\
      -v_2 x_2 - u_2 - w_2 & 0
      \end{bmatrix}.$$\end{footnotesize}We show that the ideal generated by the eight entries does not contain any real points. Eliminating $w_1,w_2,x_1,x_2,t_1,t_2$ gives the hypersurface $(s_2 u_1 + s_1 u_2)^2 + (s_1 - u_1)^2 + (s_2 + u_2)^2 + (s_2 v_1 + u_2 v_1 + s_1 v_2 - u_1 v_2)^2 = 0$.
Over the reals, this if zero if and only if the individual squares in the sum vanish, hence $s_1 = u_1$ and $s_2 = -u_2$. The ideal obtained by eliminating $w_1,w_2,x_1,x_2,v_2$ then has equation $(t_2 u_1 + t_1 u_2)^2 +  (u_1 u_2 - t_2 v_1)^2 + t_1^2 + t_2^2 + u_1^2+u_2^2 = -1$, which has no real solutions. This concludes the main case.

It remains to consider the case when
some pairs of slices of the tensor are linearly dependent. The first and second pairs of slices we subtract are always linearly independent, taking us to a $4 \times 2 \times 2$ tensor whose real rank is four less than that of $f$. The third pair of slices are dependent only if $t_1 = v_1$ and $t_2 = -v_2$. The result then follows as above, by choosing a different pair of slices to subtract, unless $s_1=u_1$ and $s_2 = -u_2$. In this case, the $4 \times 2 \times 2$ tensor has four slices spanned by
$$ M_1 = \begin{bmatrix} s_1 u_1 + t_1 v_1+1 & s_2 u_1 + t_2 v_1 \\\ s_1 u_2 + t_1 v_2 &  s_2 u_2 + t_2 v_2-1 \end{bmatrix} \quad \text{and} \quad M_2 = \begin{bmatrix}0 & 1 \\ 1 & 0 \end{bmatrix} ,$$
with first slice a scalar multiple of $M_1$, and the remaining three slices scalar multiples of $M_2$.
There does not exist a real multiple of $M_2$ that can be subtracted from $M_1$ to give a rank one matrix, because ${\rm det}(M_1 - w_1 M_2) = -(u_2 v_1 -  u_1 v_2)^2 - u_1^2 - u_2^2 - v_1^2 - v_2^2 - w_1^2 -1 = 0$ has no solutions over the reals. By Theorem~\ref{rsubstitution}, the real rank of the $4 \times 2 \times 2$ tensor is at least three. Hence we obtain an overall lower bound of $3 + 2 + 2 = 7$ on the real rank.
\end{proof}

We saw above that every real cubic surface is arbitrarily close to one of real symmetric rank five or six.
The real rank five locus is separated from the real rank six locus by a degree 40 hypersurface~\cite{MM}.
We are interested in the real analogue of Theorem~\ref{border}: to show that the real border rank and real symmetric border rank agree. Generic tensors have the same real rank as real border rank, hence their real border rank and real symmetric border rank agree by Proposition~\ref{generic}. To conclude the paper, we prove the following result.

\begin{proposition}
Real border rank and real symmetric border rank coincide for all cubic surfaces of sub-generic real symmetric border rank.
\end{proposition}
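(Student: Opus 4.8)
The plan is to reduce the statement to the complex computation of Theorem~\ref{border}, exploiting the fact that sub-generic real symmetric border rank means $r \le 4$, where $\sigma_4(V_4)$ and $\sigma_4(S_4) \cap L_4$ are defined by polynomial equations with \emph{real} coefficients. Since real and complex Zariski closure behave well here, a cubic surface with real coefficients that lies in the real closure of the real rank $r$ locus also lies in the complex variety $\sigma_r(V_4)$ (for $r \le 4$) or $\sigma_r(S_4) \cap L_4$, and conversely a real point of one of these varieties can be approximated by real rank $r$ tensors provided $r$ is small enough that the real locus is Zariski dense in its complexification. So the first step is to show that for $r \le 4$ the set of real symmetric tensors of real symmetric rank $\le r$ is Zariski dense (in fact Euclidean dense) in $\sigma_r(V_4)(\R)$, and likewise on the non-symmetric side; this follows from the Alexander--Hirschowitz dimension count together with the openness of the locus where a generic decomposition exists, since $\sigma_r(V_4)$ is not defective and a generic point of it has a real decomposition when the ambient space is real.

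Second, I would invoke the equations. For $r \le 3$ the relevant equations are minors of the (symmetric) flattening, Strassen's commuting conditions, and the Aronhold invariant, all with real coefficients; the identity $\sigma_r(V_n) = \sigma_r(S_n) \cap L_n$ proved in Lemma~\ref{curves} and Proposition~\ref{alln} is an equality of real varieties because the defining polynomials are real. For $r = 4$, Proposition~\ref{salmon} shows $\sigma_4(V_4)$ is cut out by the 36 quintics obtained by restricting the (real) degree-five salmon equations to symmetric tensors, so again $\sigma_4(V_4)(\R) = (\sigma_4(S_4) \cap L_4)(\R)$. Combining with the density statement from the first step: a real cubic surface of real symmetric border rank $r \le 4$ lies in $\sigma_r(V_4)(\R) = (\sigma_r(S_4) \cap L_4)(\R)$, hence is a real limit of real rank $\le r$ tensors, so it has real border rank $\le r$; and real border rank always bounds real symmetric border rank from below, giving equality. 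A real cubic of real symmetric border rank exactly $5$ is by definition not of sub-generic type, so there is nothing to prove there.

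The main obstacle is the density step: it is \emph{not} automatic that a real point of $\sigma_r(V_4)$ is a limit of real-rank-$\le r$ symmetric tensors, since real decompositions can fail to exist on a full-dimensional semialgebraic subset (this is exactly the phenomenon distinguishing real rank five from real rank six for generic cubic surfaces). However, for $r \le 4$ the secant variety is a proper subvariety, and the key point is that the generic fibre of the abstract secant map is finite and defined over $\R$ (Sylvester-type uniqueness in the relevant ranges, or finiteness of apolar schemes), so a real point in the open dense locus of $\sigma_r(V_4)$ has a real or complex-conjugate-paired decomposition; averaging conjugate pairs as in Proposition~\ref{generic} replaces each conjugate pair of rank-one terms by its real limit without increasing the count in the border sense. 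One has to check that the bad locus where this fails is lower-dimensional inside $\sigma_r(V_4)$ for each $r \le 4$, which is a finite case analysis using the parametrizations of $\sigma_2, \sigma_3, \sigma_4$ of the Veronese. Once that is in hand, the reduction to the already-proved complex statement is immediate, and I expect no further difficulty.
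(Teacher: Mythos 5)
Your approach differs substantially from the paper's and rests on a density claim that is false. You want to conclude that a real symmetric tensor lying in $\sigma_r(V_4)(\R)$ (the real points of the complex secant variety) has real \emph{symmetric} border rank $\le r$, which requires the real-rank-$\le r$ symmetric tensors to be Euclidean dense in $\sigma_r(V_4)(\R)$. This fails already for $r=2$: the binary cubic $2x_1^3 - 6x_1x_2^2 = (x_1+ix_2)^3 + (x_1-ix_2)^3$, viewed as a cone in four variables, is a real point of $\sigma_2(V_4)$, yet it has real symmetric rank and real symmetric border rank three (it has three distinct real roots, so Sylvester's resolvent has no real roots, and the sign of the hyperdeterminant is locally constant). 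Real border rank is a semialgebraic invariant, and the real points of the complex secant variety are genuinely larger than the locus of real border rank $\le r$; identifying the complex varieties $\sigma_r(V_4)$ and $\sigma_r(S_4)\cap L_4$ over $\R$ (via the real Strassen, Aronhold, and salmon equations) therefore says nothing about whether the two real-border-rank stratifications inside them agree.

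You recognize this obstacle, but the proposed repair does not close the gap: ``averaging'' a conjugate pair $l^3 + \overline{l}^3$ produces a real tensor of real (border) rank \emph{three}, not a real limit of rank-two tensors, and Proposition~\ref{generic} invokes this fact only to bound real rank from above (by three per conjugate pair), not to preserve border rank. The dimension count from Alexander--Hirschowitz and the finiteness of generic fibres likewise control the generic real decomposition but cannot rule out a full-dimensional semialgebraic stratum of $\sigma_r(V_4)(\R)$ where real decompositions of length $r$ fail. The paper sidesteps density entirely: for $r=2$ it compares the explicit semialgebraic descriptions by hyperdeterminant inequalities from~\cite{SS} (the symmetric inequalities are a subset of the non-symmetric ones, forcing containment the hard way); for $r=3$ it reduces to plane cubics and uses the classification in~\cite{B} to locate the only orbit where complex and real symmetric border rank could differ; for $r=4$ it eliminates the lower cases. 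Without a substitute for the density step---some semialgebraic comparison of the real strata---your argument proves only the trivial inequality.
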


\begin{proof}
The set of real rank one tensors is closed, so we begin by considering a cubic surface of real border rank two. Such cubic surfaces lie in the {\em real rank two locus}, shown in~\cite{SS} to be defined by the non-negativity of the hyperdeterminant of all $2 \times 2 \times 2$ blocks. The locus of real symmetric border rank two tensors is contained in this set, being described by the non-negativity of the diagonal (symmetric) $2 \times 2 \times 2$ blocks~\cite{SS}. All diagonal combinations occur among the non-symmetric inequalities, hence the two sets are equal.

We now consider real border rank three cubic surfaces. Since the flattening rank is bounded above by the border rank, the flattening rank is at most three and we can change coordinates, as in the previous sections, to consider $f$ as a plane cubic curve.
From Theorem~\ref{border}, it suffices to consider the orbits in~\cite[Table 1-2]{B} of cubic curves whose complex (symmetric) border rank is strictly less than their real symmetric border rank. This applies to only one orbit, which has border rank two, hence it is covered by the first paragraph.
Finally, assume $f$ is a cubic surface with real symmetric border rank four. The real non-symmetric border rank cannot be strictly less than four by the above cases. 
\end{proof}

The results in this section constitute progress towards the real rank analogues of Theorem~\ref{rank} and Theorem~\ref{border}. 
Completing the real rank version of Theorem~\ref{rank} requires proving the equality of real rank and real symmetric rank for singular irreducible cubic surfaces, and non-singular cubic surfaces for which Theorem~\ref{sylv} fails to give a decomposition.
To prove Theorem~\ref{border} for real border rank, it remains to consider cubic surfaces whose rank and border rank differ, having real border rank five or six.

We conclude the paper by posing two questions for future study.
The counter-example to Comon's conjecture in~\cite{Sh} is a tensor of size $800 \times 800 \times 800$ and symmetric rank at least $904$. The result in  
Theorem~\ref{rank} gives the agreement of (complex) rank and symmetric rank for all tensors of size $n \times n \times n$ where $n \leq 4$. This suggests the question of finding a tensor of size $n \times n \times n$, with $n$ minimal, whose rank and symmetric rank differ. These results combine to show that $5 \leq n \leq 800$. This is relevant in determining whether rank and symmetric rank agree for the sizes of tensors occurring in a particular application.
Corollary~\ref{corrank} gives the agreement of (complex) rank and symmetric rank for all tensors of symmetric rank at most seven. This suggests the question of finding a tensor of symmetric rank $r$, with $r$ minimal, whose rank and symmetric rank differ. We provide a lower bound of eight while~\cite{Sh} implies an upper bound of 906.

\subsubsection*{Acknowledgements}
I would like to thank Bernd Sturmfels for helpful discussions, and Luke Oeding for sending me a text file of 1728 salmon equations. Thanks also to Yue Ren for help with a Magma computation, and Jon Hauenstein for Bertini assistance.

\begin{small}

\noindent \footnotesize {\bf Author's address:}

\noindent  Anna Seigal,
University of California, Berkeley, USA,
{\tt seigal@berkeley.edu}.
\end{small}

\end{document}